\long\def\adraft#1{{\color{black}#1}}
\def\dd{{\partial}}
\newtheorem{theorem}{Theorem} 
\newtheorem{proposition}[theorem]{Proposition}
\newtheorem{lemma}[theorem]{Lemma}
\newtheorem{definition}[theorem]{Definition}
\newtheorem{remark}[theorem]{Remark}
\def\invertb#1{\xi_{#1}}
\def\invert#1{\invertb{#1}}
\def\ccobar{\hat\Omega C}
\def\Om{\Omega}
\def\tauinv#1{\tau(#1)^{-1}}
\def\Sz{{\textrm{\rm Sz}}} 
\def\sgn{\mathop{\mathrm{sgn}}}
\def\Shuff{\mathop{\mathrm{Shuff}}}
\def\finalom#1#2#3{\omega_{#1}(#2,#3)}
\def\finalombar#1#2#3{\overline\omega_{#1}(#2,#3)}
\title{The loop group and the cobar construction}
\author{Kathryn Hess \and Andrew Tonks}
\address{Institut de g\'eom\'etrie, alg\`ebre et topologie (IGAT) \\
    \'Ecole Polytechnique F\'ed\'erale de Lausanne \\
    CH-1015 Lausanne \\
    Switzerland}
\email{kathryn.hess@epfl.ch}
\address{STORM \\
    London Metropolitan University \\
    166--220 Holloway Road \\
    London N7 8DB\\
    United Kingdom}
\email{a.tonks@londonmet.ac.uk}
\date{\today}
\begin{document}
\maketitle
\begin{abstract} We prove that for any $1$-reduced simplicial set $X$, Adams' cobar construction $\Omega CX$ on the
normalised chain complex of $X$ is  naturally a strong deformation retract of the normalised chains $CGX$ on the Kan loop group $GX$. In order to prove this result,  we extend the definition of the cobar construction
and actually obtain the existence of such a strong deformation retract
for all $0$-reduced simplicial sets.  \end{abstract}

\section*{Introduction}

There are two classical differential graded algebra models for the 
loop space on a $1$-reduced simplicial set $X$: 
Adams' cobar construction $\Omega CX$ on the
normalised chain complex{~\cite{adams}}, 
and the normalised chains $CGX$ on the Kan loop group
$GX${~\cite{kan}}. 
{Both of these models are (weakly) equivalent to $C\Omega|X|$, 
the chains on the loop space of the realisation $|X|$}. 

In this article we show that $\Omega CX$ is actually a strong deformation retract of $CGX$, opening up the possibility of applying the tools of homological algebra to transfering perturbations of algebraic structure from the latter to the former.

\medskip\par\noindent{\bf Theorem.}
{\em For any $1$-reduced 
simplicial set $X$ 
there is a 
natural strong deformation retract of chain complexes
\begin{equation}\label{eqn:sdr}
\xymatrix{\Omega CX\rto<0.8ex>^-\phi&CGX\!\!\lto<0.8ex>^-\psi 
\;\;\;.\ar`r^d^{\qquad\qquad \Phi}[]}
\end{equation}
{Here} $\psi\phi$ is the identity map on $\Omega CX$
{and} $\Phi$ is a chain homotopy from $\phi\psi$ to the identity
map on $CGX$. {Furthermore} both $\phi$ and $\psi$ are homomorphisms of differential graded algebras.
}%
\medskip

In particular, $\Omega CX$ is isomorphic to a sub differential graded algebra of $CGX$, and both $\phi$ and $\psi$ induce isomorphisms of algebras in homology. 

\begin{remark} Let $X$ and $Y$ be $1$-reduced simplicial sets, and $f: GX \to GY$ be a simplicial map (not necessarily a homomorphism).  The theorem above gives us a natural way to construct a chain-level model of $f$.  Indeed, if we set 
$$\xi=\psi\circ Cf\circ \phi: \Omega CX \to \Om CY,$$
then
$$\xymatrix{ \Omega CX \ar [d] _{\phi}\ar [r]^\xi& \Om CX\ar [d]^\phi\\ CGX \ar [r]^{Cf}& GCX}$$
commutes up to natural chain homotopy.
\end{remark}

\begin{remark}  It was proved in \cite {hps} that if $X$ is a
  simplicial suspension, then $\Om CX$ is naturally a primitively
  generated Hopf algebra, and the chain algebra map $\phi: \Om CX\to
  CGX$ also respects comultiplicative structure.  
\adraft{In this case}
the strong deformation retract of the theorem above is actually \emph
  {Eilenberg-Zilber data} \cite {gugenheim-munkholm}, which implies
  that the chain algebra map $\psi:CGX\to \Om CX$ is also
  \emph{strongly homotopy comultiplicative} [ibid.].  
\end{remark}

In order to prove the theorem above,  we extend the definition of the cobar construction
and actually obtain the existence of such a strong deformation retract
for all $0$-reduced simplicial sets.

The homomorphism $\phi$, which we recall in the first section of this
article, was first described by 
{Szczarba}~\cite{szczarba} in
the language of twisting cochains. Given a 
simplicial set $X$
\adraft{that is $0$-reduced but not}
necessarily 1-reduced, he gives an explicit,
though somewhat complicated, formula for a twisting cochain, 
$$
\alpha_\phi:CX\to CGX,
$$
that is based on the universal twisting function
$\tau:X\to GX$ and that gives rise in usual way to an algebra homomorphism
$$
\phi:\Omega CX\to CGX.
$$

In degree zero the cobar construction is a free associative algebra on
symbols given
by the non-degenerate 1-simplices of $X$, while the
{right-hand} side is the group ring on the free group on the same
symbols. In the first section of the paper we
observe that if $X$ is not $1$-reduced, then
we may perform a change of 
rings along $\phi_0$,
obtaining an extended cobar construction $\ccobar X$, together with an algebra homomorphism
$$
\phi:\ccobar X\to CGX
$$ 
that is an isomorphism in degree zero.

In the second section we introduce the retraction map
 $\psi$ from the chains on the loop group to the extended cobar
 construction, for which we provide an explicit recursive formula. 
We prove in {fact} that $\psi$ is a {natural} homomorphism 
of chain algebras and a one-sided inverse of the 
Szczarba map $\phi$. {It is surprising that such a map has not
  been previously observed in the literature.}

In the third section we 
{complete the strong deformation retraction (\ref{eqn:sdr}) by defining} 
the natural homotopy  $\Phi$.
{For this, we use} 
the acyclic models for the loop group on 0-reduced simplices
studied by Morace and Prout\'e~\cite{mp}.

\section{Preliminaries}

\subsection{Simplicial notions and notation}
A simplicial set $X$ is a contravariant functor from the category of finite non-empty
ordinals $\Delta$ to the category of sets; more prosaically it is a sequence of
sets $X_n$, $n\geq0$, and specified face and degeneracy operators $$
d_i:X_n\to X_{n-1},\qquad s_i:X_n\to X_{n+1},\qquad (0\le i\le n)
$$
satisfying the simplicial identities, see for example \cite{may}.
A simplicial set is $n$-reduced if $X_k\cong\{*\}$ for $k\leq n$. The
notions of simplicial group and 
\adraft{simplicial objects in other categories}
are analogous.

Given an element $x\in X_n$ and
any composite $\theta$ of simplicial face and degeneracy
 operators, represented by a monotonic function
$f:\{0,\dots,m\}\to \{0,\dots,n\}$ in $\Delta$, we also write
$$
\theta(x)\;\;=\;\;x_{(f(0),\dots,f(m))}
$$
for the corresponding element of $X_m$. We may write the face and
degeneracy maps \adraft{themselves}, for example, as
\begin{align*}
d_i(x)\;\;&=\;\;x_{(0,\dots,\widehat i,\dots,n)},
\\
s_i(x)\;\;&=\;\;x_{(0,\dots,i,i,\dots,n)}.
\end{align*}

The simplicial relations imply that any simplicial 
operator 
\adraft{$X_n\to X_m$}
has normal form 
\[\theta\;\;=\;\;s_{i_1}\ldots s_{i_q}d_{j_1}\ldots d_{j_r}\]
with $i_k>i_{k+1}$ and $j_k<j_{k+1}$ for all $k$.
In this form, the corresponding {\em derived operator} 
\adraft{$X_{n+1}\to X_{m+1}$}
is
\[\theta'\;\;=\;\;s_{i_1+1}\ldots s_{i_q+1}d_{j_1+1}\ldots
d_{j_r+1}\,.\]
An operator $\theta$ is {\em frontal\/} if it contains no
$d_0$; such operators satisfy $\theta's_0=s_0\theta$.

\subsection{The cobar construction}

We introduce a slightly extended definition of the cobar construction, 
\adraft{which will be better suited for applying}
to the normalised chain complex on a $0$-reduced simplicial set. 
Our definition generalises the classical construction of Adams, with which it agrees for simply connected coalgebras.

Let ${(C,\dd,\Delta)}$ be a connected differential graded
coalgebra over a commutative ring $R$, {so that} $C_{0}=R$.  We suppose furthermore that $C$ is $R$-free in each degree. Consider the
ring $B$ given by the free associative $R$-algebra on the desuspension of $C_1$,
\begin{align*}
B&\;\;=\;\;T(s^{-1}C_1)\;\;=\;\;\bigoplus_{k\geq0}(s^{-1}C_1)^{\otimes k},
\end{align*}
Fix an $R$-basis $\{x_{j}\mid j\in J\}$ of $C_{1}$, and let $A$ be the
ring {obtained from $B$} by freely adjoining inverses
$\invert {{j}}$
of all elements of the form $1+s^{-1}x_{j}$, for all $j\in J$. Explicitly,
\begin{align*}
A
&\;\;=\;\;T_B\bigl(\invert {{j}}\,|\,j\in J\bigr)
\;\;/\;\;\bigl(\invert {{j}}\otimes(1+s^{-1}x_{j})=1=(1+s^{-1}x_{j})\otimes\invert {{j}}\bigr).
\end{align*}
Observe that the relations may also be expressed in the form
$$
\invert j\otimes s^{-1}x_{j}\;\;=\;\;1-\invert j\;\;=\;\;s^{-1}x_{j}\otimes\invert j.
$$

The ring $A$ may be regarded as a differential graded algebra concentrated
in degree zero. The graded algebra underlying the extended cobar construction $(\ccobar ,\partial^\Omega)$ is then
\begin{align*}
\ccobar \;\;
&=\;\;T_A(s^{-1}C_{\geq2})\\
&=\;\;\bigoplus_{k\geq 0} A\otimes (s^{-1}C_{\geq2}\otimes A)^{\otimes k}.
\end{align*}
Each $R$-module $(\ccobar )_n$ is generated by
words $$a=a_1\otimes\dots\otimes a_r,\qquad |a_i|=n_i,\quad|a|=n=\sum n_i,
$$ 
where
either $a_i=s^{-1}c$ for some $R$-basis element 
$c\in \adraft{  C_{ n_{i} +1 } }$ 
or $n_i=0$ and $a_i=\invert{j}$ for some $j\in J$. 
This $R$-module is free  on those words in which 
$\invert j$ does not appear adjacent to $s^{-1}x_{j}$.
The algebra multiplication is induced by concatenation of words,
extended bilinearly to $\ccobar $, modulo the relation that
$\invert j$ is inverse to $1+s^{-1}x_{j}$.

The differential $\partial^\Omega$ on $\ccobar$ is the derivation of $A$-algebras
that is specified by
\begin{eqnarray}
\nonumber
\partial_n^\Omega s^{-1}c&=&
- s^{-1}dc
\;\;+\;\;
{(-1)^{|c_{i}|}} s^{-1}c_{i}\otimes s^{-1} c^{i}
\end{eqnarray}
for all basis elements {$c\in C_{n+1}$ and all $n\geq 1$}, where $\Delta(c)= 1\otimes c+ c\otimes 1 + c_{i}\otimes c^{i}$ (using the Einstein summation convention). Note that $\partial^\Omega$ is necessarily zero on elements of $A$.

The unit $1\in(\ccobar )_0$ is identified with the empty word, via
the
isomorphism $R\cong ({s^{-1}C_1})^{\otimes 0}$.

\begin{remark} If $C$ is simply connected, so that $C_1=\{0\}$ and therefore  $A=B=R$,
then $\ccobar $ coincides with the usual cobar contruction $\Omega C$
defined by Adams.
\end{remark}

\subsection{The Kan loop group}\label{szsection}

Let $X$ be a $0$-reduced simplicial set and $G$ a simplicial group.
A {\em twisting function} $\tau:X\to G$ is a collection of 
functions  of  degree $-1$
$$\{\tau:X_{n+1}\to G_n\mid n\ge0\}$$
satisfying
\begin{align}
\tau s_0x&=1\,, \nonumber
\\
s_i\tau x&=\tau s_{i+1}x \label{twisting2}\,,\\
d_0\tau x&={\tau d_0x}^{-1}\cdot \tau d_1x\,,\\
d_i\tau x&=\tau d_{i+1}x \quad\textrm{ if }i\ge1
\,.\label{twistinglast}
\end{align}

Let $GX$ denote  the Kan loop group on $X$, which is a simplicial
group that models the space of based loops on the geometric
realization of $X$ {(see \cite{kan,may})}.  There is a {\em universal} twisting function
\begin{align*}
\tau:X_{n+1}&\twoheadrightarrow(GX)_n=
{\mathbf F}(X_{n+1})/{\mathbf F}(s_0X_n)
\\
{\tau x}&{\;\,=\;\,} [x]
\end{align*}
sending $x\in X_{n+1}$ to the class of the corresponding generator in
quotient of free groups,
{and the  simplicial structure on $GX$ is defined by
\eqref{twisting2}--\eqref{twistinglast}}.

Recall that the normalised chain complex $CG$ on a simplicial group
$G$ also has a
differential graded algebra structure, with multiplication given by
the shuffle map and the multiplication in $G$,
$$
m:CG\otimes CG\longrightarrow C(G\times G)\longrightarrow CG,
$$
that is,
\begin{equation}\label{m-shuffle}
m(g\otimes h)=
\sum
(-1)^{\sgn{(\mu,\nu)}}
s_{\mu(q)}\dots s_{\mu(1)} g \cdot s_{\nu(p)}\dots s_{\nu(1)} h,
\quad g\in G_p,h\in G_q,
\end{equation}
where the summation is over all $(p,q)$-shuffles {$(\mu,\nu)\in\Shuff(p,q)$}.

The following proposition is the motivation for our extension of the cobar
construction. Recall that for any simplicial set $X$, the degree $n$
part of its normalised chain complex, $C_{n}X$, is the free abelian
group on the set of all nondegenerate $n$-simplices of $X$
\adraft{and that $CX$ has a comultiplication $\Delta:CX\to CX\otimes
  CX$ given by the Alexander-Whitney diagonal approximation 
\begin{align*}
\Delta(x)&\;=\;x_{(0)}\otimes x\;+\;
\sum_{i=1}^{n-1}x_{(0,\dots,i)}\otimes x_{(i,\dots,n)} 
\;+\;x\otimes x_{(n)}
\end{align*}  
for $x\in X_n$, $n\geq1$, and with $\Delta(x)=x\otimes x$ for $x\in
X_0$.} 
In particular, if $X$ is $0$-reduced, then $CX$ is a connected,
differential graded coalgebra over $\mathbb Z$. 
 
\begin{proposition}\label{degreezero}
Let $X$ be a $0$-reduced simplicial set and $GX$ its Kan loop
group. Then there is an isomorphism of rings
\[
\xymatrix{(\ccobar X)_0\rto<1.1ex>_-\cong^-{\phi_{0}}&(CGX)_0\lto<1.1ex>^-{\psi_{0}} 
}
\]
determined by
\begin{align*}
\psi_0(\tau x)&=\invert x&
\psi_0(\tau x^{-1})&=1+s^{-1}x\\
\phi_0(s^{-1}x)&=\tau x^{-1} - 1&
\phi_0(\invert x)&=\tau x.
\end{align*}
\end{proposition}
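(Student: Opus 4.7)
The plan is to show each map is a well-defined ring homomorphism, then check that the given formulas give mutually inverse maps on generators.

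First I would make the two rings explicit. Since $X$ is $0$-reduced, $X_0=\{*\}$, so the unique degenerate $1$-simplex is $s_0*$ and $C_1X$ is $\mathbb{Z}$-free on the set $X_1\setminus\{s_0*\}$ of non-degenerate $1$-simplices. By the construction of the preceding subsection, $(\ccobar X)_0=A$ is the free associative $\mathbb{Z}$-algebra on the generators $s^{-1}x$ and $\invert x$ (one of each for every non-degenerate $x\in X_1$), modulo the relations $(1+s^{-1}x)\invert x=1=\invert x(1+s^{-1}x)$. On the other hand, $(GX)_0=\mathbf{F}(X_1)/\mathbf{F}(s_0X_0)$ is free as a group on the same set, with free generators $\tau x$, so $(CGX)_0=\mathbb{Z}[(GX)_0]$ is the corresponding integral group ring.

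Next I would construct $\psi_0$. Since $(GX)_0$ is the free group on the non-degenerate $\tau x$'s, to extend the assignment $\tau x\mapsto \invert x$ to a group homomorphism into the units of $A$, I only need to verify that each $\invert x$ is invertible in $A$, with inverse forced to be $\psi_0(\tau x^{-1})=1+s^{-1}x$; this is exactly the defining relation of $A$. This group homomorphism then extends by $\mathbb{Z}$-linearity to a ring homomorphism $\psi_0\colon\mathbb{Z}[(GX)_0]\to A$.

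For $\phi_0$, I would invoke the universal property of the free algebra $T(s^{-1}C_1)$ to define a ring map $B\to\mathbb{Z}[(GX)_0]$ by $s^{-1}x\mapsto \tau x^{-1}-1$. To extend this to $A$, I need each $1+\phi_0(s^{-1}x)=\tau x^{-1}$ to be invertible in the target; it is, with inverse $\tau x$, and this forces $\phi_0(\invert x)=\tau x$ as stated. Thus the defining relation of $A$ is respected, yielding a well-defined ring homomorphism $\phi_0\colon A\to\mathbb{Z}[(GX)_0]$.

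Finally I would verify $\psi_0\phi_0=\mathrm{id}_A$ and $\phi_0\psi_0=\mathrm{id}_{(CGX)_0}$ by checking on the chosen generators: $\psi_0\phi_0(s^{-1}x)=\psi_0(\tau x^{-1}-1)=(1+s^{-1}x)-1=s^{-1}x$ and $\psi_0\phi_0(\invert x)=\psi_0(\tau x)=\invert x$; conversely $\phi_0\psi_0(\tau x)=\phi_0(\invert x)=\tau x$. Both composites agree with the identity on generators of rings, so they are equal to the identity. There is essentially no obstacle here beyond correctly identifying the two sides and invoking the universal properties: adjoining the inverses $\invert x$ to $B$ was designed exactly so that $A$ corepresents the group ring $\mathbb{Z}[(GX)_0]$ under the substitution $\tau x\leftrightarrow \invert x$.
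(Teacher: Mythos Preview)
Your proof is correct and follows essentially the same approach as the paper's: identify both rings explicitly, define the maps via their universal properties, and verify the two composites on generators. The paper's only additional remark is that the degree-$0$ multiplication on $CGX$ (which a priori is defined via the shuffle map) reduces to the group-ring product $m(g\otimes h)=g\cdot h$, which justifies your identification $(CGX)_0\cong\mathbb{Z}[(GX)_0]$ as rings.
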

\begin{proof}
The proof is straightforward. Note that if $x$ is the degenerate element $s_0(*)$
then the four equations say $\psi_0(1)=1$, $\phi_0(0)=0$,
$\phi_0(1)=1$. In degree 0 the multiplication \eqref{m-shuffle} is
just $m(g\otimes h)=g\cdot h$, and we have 
\begin{align*}
\psi_0(\tau x_1^{\alpha_1}\dots\tau x_r^{\alpha_r})&=
\psi_0(\tau x_1^{\alpha_1})\otimes\dots\otimes \psi_0(\tau x_r^{\alpha_r})\\
\phi_0(a_1\otimes \dots\otimes a_r)&=
\phi_0(a_1)\cdot\dots\cdot\phi_0(a_r)
\end{align*}
where $x_i\in X_1$, $\alpha_i=\pm1$ and $a_i=s^{-1}x_i$ or
$\invert{x_i}$. This is well defined: $\psi_{0}(g)$ is inverse to $\psi_{0}(g^{-1})$ for all $g\in (GX)_{0}$
and $\phi_0(\invert x)$ is inverse to $\phi_0(1+s^{-1}x)$ for all $x\in X_{1}\smallsetminus \{s_{0}(*)\}$. It is also
clear that the composites $\phi_0\psi_0$ and $\psi_0\phi_0$ are the
respective identity maps:
\begin{align*}
\phi_0\psi_0(\tau x)&=\phi_{0}(\invert x)=\tau x&
\phi_0\psi_0(\tau x^{-1})&=\phi_0(1+s^{-1}x)=\tau x^{-1}\\
\psi_0\phi_0(s^{-1}x)&=\psi_0(\tau x^{-1} - 1)=s^{-1}x&
\psi_0\phi_0(\invert x)&=\psi_0(\tau x)=\invert x.
\end{align*}
\end{proof}

\subsection{The Szczarba map}

A map of differential graded algebras
$$
\phi: \adraft{ \Omega C } X \to CGX
$$
was given explicitly by Szczarba in the language of twisting
cochains.
The following Definition, Lemma and Theorem are from sections 2 and 3 of 
Szczarba's paper~\cite{szczarba}, adapted slightly to 
\adraft{define a map on the extended cobar construction
$$
\phi:\ccobar X\to CGX
$$
that extends the isomorphism}
$\phi_0$ of Proposition \ref{degreezero}.

Let $S_n$ be the set of $n!$ sequences of integers $$i=(i_1,\ldots,i_n)
\quad\text{ such that }\; 0\leq i_k\leq n-k\text{ for each }{k}.$$ 
In particular, $i_n=0$. 
The {\em sign} of such a sequence $i\in S_n$ is 
$$
(-1)^{\sum i},\quad\text{ where }\quad{\textstyle\sum i}\;=\;i_1+i_2+\cdots+i_n\,.
$$

\begin{definition}\label{szdef}
Given a twisting function $\tau:X\to G$, the \,{\em Szczarba operators}
are the functions
\[\Sz_i:X_{n+1}\longrightarrow G_n\,,\qquad 
\adraft{i=(i_1,\ldots,i_n)}
\in S_n\,,\]
given by the following product in $G_n$,
\begin{eqnarray*}
\Sz_i\,x&=&
D_{0;\,i}^{n+1}
\;{\tau x}^{-1}\cdot
D_{1;\,i}^{n+1}\;{\tau d_0x}^{-1}\cdot\,\cdots\,\cdot
D_{n;\,i}^{n+1}\;{\tau d_0^nx}^{-1}\,.
\end{eqnarray*}
Here the operators $D_{j;\,i}^{n+1}:G_{n-j}\to G_n$ for $i\in S_n$,
$j=0,\ldots,n$, are defined as
\begin{eqnarray}
D_{0;\,()}^1&=&
\adraft{Id_{G_0},}
\nonumber\\ \label{dji}
D_{j;\,i_1,\ldots,i_n}^{n+1}&=&
\left\{\displaystyle\begin{array}{ll}
D^{n\,\,\prime}_{j;\,i_2,\ldots,i_n}
\,s_0\,d_{i_1-j}&\textrm{if }j<i_1\,,
\\[1.3ex]
D^{n\,\,\prime}_{j;\,i_2,\ldots,i_n}&\textrm{if }j=i_1\,,
\\[1.3ex]
D^{n\,\,\prime}_{j-1;\,i_2,\ldots,i_n}\,s_0&\textrm{if }j>i_1\,.
\end{array}\right.
\end{eqnarray}
\end{definition}
As simplicial operators these are all frontal:
\adraft{defining 
$D_{j;\,i}^{n+1}\colon X_{n-j}\to X_n$ in the same way,
one has
$D_{j;\,i}^{n+1}\tau=\tau\,{D_{j;\,i}^{n+1}}'\colon X_{n-j+1}\to G_n$.}

\begin{lemma}\label{szrels}
The Szczarba operators satisfy
\par\smallskip\noindent
\[\begin{array}{lcl}
d_0\,\Sz_{i_1,\ldots,i_n}
&=&\qquad\quad
\Sz_{i_2,\ldots,i_n}\,d_{i_1+1}\,,
\\[1.3ex]
d_k\,\Sz_{i_1,\ldots,i_k,i_{k+1},\ldots,i_n}
&=&\quad
d_k\,\Sz_{i_1,\ldots,i_{k+1},i_k-1,\ldots,i_n}\;\;
\textrm{ if }\;i_k>i_{k+1}\,,
\\[1.3ex]
d_n\,\Sz_{i_1,\ldots,i_n}\,x&=&
s_{\mu}\,\Sz_{i'
}\,x_{(0,\ldots,r)}
\,\cdot\,
s_{\nu}\,\Sz_{i''
}\,x_{(r,\ldots,n+1)}\,.
\end{array}\]
{In the last equation the sequences $i'$, $i''$, the integer $r$} and the $({r-1,n-r})$-shuffle
$(\mu,\nu)$ are defined by a certain bijection
\begin{eqnarray*}
S_n&\cong&\bigcup_{r=1}^n \;\Shuff({r-1,n-r})
\times S_{r-1}\times S_{n-r}
\\
i\;&\mapsto&\qquad\qquad\;(\quad(\mu,\,\nu)\;\;,\qquad i'\,,\qquad i''\quad)\,,
\end{eqnarray*}
see~\cite[{\sc  Lemma 3.3}]{szczarba}, which respects
parity as follows:
\begin{eqnarray*}
{\textstyle n+\sum i}&=&{\textstyle r+\sgn(\mu,\nu)+\sum i'+\sum i''}\pmod2\,.
\end{eqnarray*}
\end{lemma}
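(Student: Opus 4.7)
The plan is to prove all three identities simultaneously by induction on $n\geq 0$, using the recursive definition (\ref{dji}) of the operators $D_{j;i}^{n+1}$ together with the twisting function identities \eqref{twisting2}--\eqref{twistinglast}, the simplicial identities, and the fact that the operators $D_{j;i}^{n+1}$ are frontal (so their derived operators satisfy $\theta' s_0=s_0\theta$). The base case is immediate, since for $n=0$ we have $\Sz_{()}={\tau}^{-1}$ and each identity reduces to a defining property of $\tau$ or is vacuous.

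For the first relation, I would apply $d_0$ to the product
$\Sz_ix=\prod_{j=0}^{n} D_{j;i}^{n+1}\,\tau d_0^jx^{-1}$
using the fact that $d_0$ is a homomorphism of groups. The recursion (\ref{dji}) splits each $D_{j;i}^{n+1}$ into three cases according as $j<i_1$, $j=i_1$, or $j>i_1$. In each case $d_0$ can be pulled past the leading $s_0$ or derived operator $\theta'$ using the simplicial identities. The twisting identity $d_0\tau y=(\tau d_0y)^{-1}\cdot\tau d_1y$ applied at the factor $j=i_1$ produces a telescoping cancellation with the adjacent factor, leaving exactly $\Sz_{i_2,\ldots,i_n}\,d_{i_1+1}\,x$.

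For the second relation, the hypothesis $i_k>i_{k+1}$ guarantees that both sequences lie in $S_n$; the recursion (\ref{dji}) shows that, after applying $d_k$, the $k$-th and $(k{+}1)$-st factors of the product defining $\Sz_i\,x$ involve operators of the form $s_0 d_{i_k-k}$ and $s_0 d_{i_{k+1}-(k+1)}$ (or similar), and the simplicial identity $d_a d_b=d_{b-1}d_a$ for $a<b$ shows the two resulting products agree.

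The third relation is the main obstacle. Here $d_n$ does not commute with the group multiplication defining $\Sz_ix$, so instead of a single Szczarba operator one obtains a product reflecting the shuffle/coalgebra structure. The strategy is to track, for each index $j$, where the face $d_n\circ D_{j;i}^{n+1}$ lands: using the recursion, one sees that there is a unique \emph{splitting index} $r$ such that for $j<r$ the operators factor through $x_{(0,\ldots,r)}$ while for $j\geq r$ they factor through $x_{(r,\ldots,n+1)}$, with the remaining degeneracies assembling into a $(r{-}1,n{-}r)$-shuffle $(\mu,\nu)$. This is exactly the content of Szczarba's bijection $S_n\cong\bigcup_{r}\Shuff(r{-}1,n{-}r)\times S_{r-1}\times S_{n-r}$ cited from~\cite[Lemma 3.3]{szczarba}, and the parity identity follows by counting the signs introduced by passing each $s_0$ through each $d$ during the bookkeeping. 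The bulk of the work lies in verifying this combinatorial bijection and checking that the operators indeed factor correctly through the shuffle multiplication \eqref{m-shuffle}; after that, the inductive step for relation (3) reduces to applying relation (1) to each half of the decomposition.
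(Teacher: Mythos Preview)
The paper does not actually prove this lemma: it is stated without proof and attributed wholesale to Szczarba's original paper (the surrounding text says ``The following Definition, Lemma and Theorem are from sections 2 and 3 of Szczarba's paper~\cite{szczarba}''), with only the parenthetical remark that Szczarba's inductive sign $\epsilon(i,n+1)$ equals $n+\sum i$ in the present conventions. So there is nothing here to compare your argument against except the citation.

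Your sketch is a reasonable outline of how one would re-derive Szczarba's relations directly from the recursion~(\ref{dji}) and the simplicial and twisting identities. The shape of the argument for the first two relations is right: $d_0$ on the product telescopes after using $d_0\tau y=(\tau d_0y)^{-1}\tau d_1y$, and the second relation is indeed a bookkeeping exercise in the recursion plus $d_ad_b=d_{b-1}d_a$. For the third relation you correctly identify that the real content is the combinatorial bijection $S_n\cong\bigcup_r\Shuff(r{-}1,n{-}r)\times S_{r-1}\times S_{n-r}$ and the verification that the factors split accordingly; this is exactly what Szczarba proves in his Lemma~3.3, and the present paper simply imports it. One small caution: your closing sentence that ``the inductive step for relation (3) reduces to applying relation (1) to each half'' is not quite how the argument runs---$d_n$ does not feed back into a $d_0$ on the pieces; rather, once the bijection is established the factorisation of the operators $D^{n+1}_{j;i}$ under $d_n$ gives the shuffle product directly, with no further appeal to relation~(1). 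But since the paper itself offers no proof, your plan is already more than what is required here.
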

Note that Szczarba's sign conventions differ slightly from ours, and that his
inductively-defined parity $\epsilon(i,n+1)$ is in fact just 
$n+\sum i$.
\medskip
\begin{theorem}\label{thm:defn-phi}
For any twisting function $\tau:X\to G$ on a 0-reduced simplicial set $X$
there is a canonical homomorphism of differential graded algebras 
\begin{eqnarray*}
\phi\; :\;\ccobar X&\longrightarrow&C G\\
\textrm{defined by}\qquad\qquad\qquad
\phi_0 (\invert {x_1})&=& \tau x_1
\\
\phi_0 (s^{-1}x_1)&=& \tauinv{x_1}\;-\;1
\\
\phi_n (s^{-1}x_{n+1})&=&\sum_{i\in S_n}(-1)^{\sum i}\;\;\Sz_i\,x\,,\quad(n\ge1)\,,
\end{eqnarray*}
for $x_{n+1}\in X_{n+1}$.
\end{theorem}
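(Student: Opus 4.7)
The plan is to extend Szczarba's argument \cite{szczarba}, which constructs $\phi$ on the classical cobar $\Omega CX$ for $1$-reduced $X$, so that it applies to the extended cobar $\ccobar X$ of a $0$-reduced simplicial set. The construction splits naturally into three pieces: well-definedness of $\phi$ as a graded algebra map, the chain map property, and the degree-zero case.

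First I would verify that $\phi$ is a well-defined homomorphism of graded algebras. The prescribed formulas define $\phi$ on the generators $\invert j$ and $s^{-1}c$, and we extend multiplicatively, so the only check needed is compatibility with the relations $\invert j\otimes(1+s^{-1}x_j)=1=(1+s^{-1}x_j)\otimes\invert j$ in $A$. These relations live entirely in degree zero, so this is precisely the content of Proposition~\ref{degreezero}, which I invoke directly to get $\phi_0$ as a ring isomorphism $(\ccobar X)_0\cong(CGX)_0$.

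Second I would establish the chain map property $d\phi=\phi\,\partial^\Omega$. Both sides are derivations of graded algebras ($\partial^\Omega$ by construction, and $d$ by the fact that the shuffle multiplication on $CGX$ is a chain map), so it suffices to check on generators, and the identity is trivial on elements of $A$. On a basis element $s^{-1}c$ with $c\in C_{n+1}$ and $n\geq1$, the identity to verify reads
\[
\sum_{k=0}^{n}(-1)^{k}d_{k}\!\!\sum_{i\in S_n}(-1)^{\sum i}\Sz_i x
\;=\;-\phi(s^{-1}dc)\;+\;(-1)^{|c_i|}\phi(s^{-1}c_i)\otimes\phi(s^{-1}c^i).
\]
The three relations of Lemma~\ref{szrels} are tailored for exactly this decomposition. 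Relation (a) rewrites the $d_0$ contribution as $\phi$ applied to faces of $c$; combined with the frontal face operators appearing in the lower Szczarba operators, this accounts for $-\phi(s^{-1}dc)$. The middle faces $d_k$ for $1\leq k\leq n-1$ cancel in pairs via the sign-reversing involution on $S_n$ that swaps adjacent entries $(i_k,i_{k+1})\leftrightarrow(i_{k+1},i_k{-}1)$ whenever $i_k>i_{k+1}$, using relation (b). Finally, relation (c) combined with the bijection $S_n\cong\bigsqcup_r\Shuff(r{-}1,n{-}r)\times S_{r-1}\times S_{n-r}$ and the shuffle formula \eqref{m-shuffle} converts the $d_n$ terms into the coproduct contribution, with the parity identity at the end of Lemma~\ref{szrels} ensuring the signs match.

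The main obstacle is the sign bookkeeping: Szczarba's inductively defined parity $\epsilon(i,n+1)$ must be matched to $(-1)^{n+\sum i}$, and Koszul signs from passing $s^{-1}$ through tensor products have to be tracked carefully through the coproduct term, especially at the boundary between the $A$-part and the $s^{-1}C_{\geq 2}$-part of $\ccobar X$. Once this is reconciled, the computation in positive degrees is essentially that of Sections~2--3 of \cite{szczarba}, so the only genuinely new work is the treatment of the extended generators $\invert j$, which is forced on us by Proposition~\ref{degreezero}.
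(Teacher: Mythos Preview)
Your proposal is correct and follows essentially the same route as the paper: extend $\phi$ multiplicatively via the shuffle product, invoke Proposition~\ref{degreezero} for degree zero, and verify the chain-map identity on generators using the three relations of Lemma~\ref{szrels}, with the middle faces cancelling in pairs and the bijection handling $d_n$. One small imprecision: relation (a) only produces the faces $d_1x,\dots,d_nx$; the extreme faces $d_0x$ and $d_{n+1}x$ needed to complete $-\phi(s^{-1}dx)$ actually emerge from the $r=1$ and $r=n$ boundary cases of the $d_n$ (coproduct) contribution, via the constant term $-1$ in $\phi_0(s^{-1}x_1)=\tau(x_1)^{-1}-1$, which is precisely where the $0$-reduced extension differs from Szczarba's original $1$-reduced computation.
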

\begin{proof}
The map $\phi$ extends linearly, and multiplicatively via 
$$\phi_{p+q}(a\otimes b)=m(\phi_p(a)\otimes\phi_q(b)),\qquad |a|=p,|b|=q,$$ 
where $m$ is the multiplication \eqref{m-shuffle}, to all of $\ccobar X$.
We show $\phi$ is a chain map, i.e., that 
$\partial_n\phi_n=\phi_{n-1}\partial^\Omega _n$. 
For $x\in X_2$ we can write
\begin{align*}
\partial_1^\Omega s^{-1}x
\;&=\;-s^{-1}d_0x+s^{-1}d_1x-s^{-1}d_2x-s^{-1}x_{(0,1)}\otimes
s^{-1}x_{(1,2)}
\\ &=\;
(1+s^{-1}d_1x)-(1+s^{-1}x_{(0,1)})\otimes(1+s^{-1}x_{(1,2)})
\end{align*}
and so we have, by Lemma \ref{szrels},
\begin{align*}
\partial_1\phi_1s^{-1}x&\;=\;\partial_1\Sz_{0}x
\;=\;d_0\Sz_{0}x-d_1\Sz_{0}x\;=\;\Sz_{()}d_1x-\Sz_{()}x_{(0,1)}\cdot\Sz_{()}x_{(1,2)}\\
&\;=\;\tauinv{d_1x}-\tauinv{x_{(0,1)}}\cdot\tauinv{x_{(1,2)}}
\;=\;\phi_0\partial_1^\Omega s^{-1}x.
\end{align*}
For $x\in X_{n+1}$ the argument is essentially the same. We have
\begin{eqnarray*}
\partial_n\,\phi_n\,s^{-1}x&=&
\sum_{i\in S_n}(-1)^{\sum i}\;\partial_n\,\Sz_i\,x\\
&=&\sum_{i\in S_n}\;(-1)^{\sum i}\;
\biggl(\,\sum_{k=0}^{n}(-1)^{k}\;d_k\,\Sz_i\,x
\biggr)
\end{eqnarray*}
where, by Lemma~\ref{szrels}, all the terms for $0<k<n$ cancel, and
the terms for $k=0,n$ may be rewritten as
\begin{eqnarray*}
&&
\!\!\!\!\!\!\!\!\!
\sum_{
\begin{subarray}{c}
\!\!\!\!
0\le i_1\le n-1
\!\!\!
\\[0.6ex]
\!\!\!
i\in S_{n-1}
\end{subarray}
}\!\!(-1)^{i_1+\sum i}\,\Sz_i\,d_{i_1+1}\,x
+\!\!
\sum_{
\begin{subarray}{c}
\!\!\!\!
1\le r\le n
\!\!\!
\\[0.6ex]
\!\!\!\!
(\mu,\nu),\, i',i''
\!\!\!\!
\end{subarray}
}
(-1)^{n+\sum i}
s_{\mu}\,\Sz_{i'}\,x_{(0,\ldots,r)}
\cdot
s_{\nu}\,\Sz_{i''}\,x_{(r,\ldots,n+1)}
\\&=&
\!\!\!
\sum_{\!\!\!\!1\le r\le n\!\!\!}
(-1)^{r-1}\; \phi_{n-1}\,s^{-1}d_rx
\,+\,
\sum_{\!\!\!\!1\le r\le n\!\!\!}
(-1)^{r}\;m(
\phi_{r-1}\,s^{-1}x_{(0,\ldots,r)}\otimes 
\phi_{n-r}\,s^{-1} x_{(r,\ldots,n+1)})
\\&&\qquad\qquad\qquad\qquad\qquad
{}-1\otimes \phi_{n-1}s^{-1}x_{(1,\dots,n+1)}
+(-1)^n\phi_{n-1}s^{-1}x_{(0,\dots,n)}\otimes 1
\\&=&
\phi_{n-1}\biggl(
\;{ 
\sum_{r=0}^{n+1}(-1)^{r-1} s^{-1}{d_rx}
\;\;+\;\;
\sum_{r=1}^n(-1)^r s^{-1}x_{(0,\ldots,r)}\otimes s^{-1}x_{(r,\ldots,n+1)}
}\biggr)
\\&=&
\phi_{n-1}\partial^\Omega _n\,x\,.
\end{eqnarray*}
\end{proof}

We will need one further property of the Szczarba operators.
\begin{lemma}\label{commondeg}
For all $x\in X_{n+1}$ and $i\in S_n$, the following product in $G_n$
is degenerate:
\begin{eqnarray*}
D_{0;\,i}^{n+1}
\;{\tau x}\cdot
\Sz_i\,x&=&
D_{1;\,i}^{n+1}\;{\tau d_0x}^{-1}\cdot\,\cdots\,\cdot
D_{n;\,i}^{n+1}\;{\tau d_0^nx}^{-1}\,.
\end{eqnarray*}
\end{lemma}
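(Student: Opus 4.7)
The equation asserted in the lemma is a formal consequence of the fact that every simplicial face and degeneracy operator on $G$ is a group homomorphism. The composite $D_{0;\,i}^{n+1}$ is therefore also a group homomorphism, so $D_{0;\,i}^{n+1}(\tau x)$ is the inverse in $G_n$ of $D_{0;\,i}^{n+1}(\tau x)^{-1}$; substituting this into the definition of $\Sz_i x$ yields the displayed identity at once. The substantive content of the lemma is therefore the claim that the resulting product represents a degenerate element of $G_n$.

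My plan is to prove this by induction on $n$, with a case analysis based on the recursive formula \eqref{dji}. In the base case $n=1$, the only element of $S_1$ is $(0)$ and $D_{1;(0)}^2 = s_0$, so the product reduces to $s_0\,\tauinv{d_0x}$, which is manifestly degenerate. For the inductive step, the key observation is that for each $j\ge 1$ the operator $D_{j;\,i}^{n+1}$ necessarily contains an explicit $s_0$ at some depth of its recursive unfolding: both branches $j<i_1$ and $j>i_1$ of \eqref{dji} introduce $s_0$ immediately, while the branch $j = i_1$ passes to $D_{j;\,i'}^{n\,\prime}$ on $i' = (i_2,\ldots,i_n)$, and this branch cannot be taken indefinitely because the constraint $i_k\le n-k$ forces $j\ne i_k$ at some finite depth.

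I would then propagate each such inner $s_0$ out to the leftmost position of its $D_{j;\,i}^{n+1}$ by iterated application of the frontal identity $\theta's_0 = s_0\theta$ and its higher-order analogues, together with the twisting axiom $d_a\tau = \tau d_{a+1}$ (for $a\ge 1$), the simplicial commutation $d_{i_1-j+1}d_0^j = d_0^j d_{i_1+1}$ to push residual face operators through $\tau$, and the identities $s_\ell s_a = s_{a+1}s_\ell$ (for $\ell\le a$) to reconcile degeneracy indices. Once every factor $D_{j;\,i}^{n+1}\tauinv{d_0^jx}$ has been rewritten with a common degeneracy $s_\ell$ at the head, the group-homomorphism property of $s_\ell$ will display the full product as $s_\ell$ applied to an element of $G_{n-1}$, which is visibly degenerate.

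The principal obstacle will be the combinatorial bookkeeping of this last step: the $s_0$'s introduced at different depths of the recursion become degeneracies of different indices after the intervening applications of the derivation $\theta\mapsto\theta'$, and one must verify that they can be uniformly reconciled into a single $s_\ell$ that factors out of all $n$ terms simultaneously. I expect this to come down to a routine but tedious inductive analysis of the sequence $i\in S_n$ and of how the branches of \eqref{dji} interleave its entries.
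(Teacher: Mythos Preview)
Your overall strategy is right, and the paper follows the same line: show that each $D^{n+1}_{j;\,i}$ for $j>0$ is $s_\ell$-degenerate for some \emph{common} index $\ell$, so that $s_\ell$ factors out of the whole product. The gap is precisely the part you label the ``principal obstacle,'' and it is not routine bookkeeping.

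If you follow your description literally and take, for each $j$, the \emph{first} $s_0$ that appears in the recursive unfolding of $D^{n+1}_{j;\,i}$, the indices you obtain after pushing that degeneracy to the front need not agree, and in general cannot be reconciled by simplicial identities. For example, take $n=3$ and $i=(2,1,0)$. Unfolding gives $D^4_{1;\,i}=s_2s_0d_1$, $D^4_{2;\,i}=s_2s_1$, $D^4_{3;\,i}=s_2s_1s_0$. The first $s_0$'s occur at depths $1,2,1$ respectively, yielding head degeneracies $s_0,s_1,s_0$; but $D^4_{2;\,i}=s_2s_1$ is simply not $s_0$-degenerate (check $d_0s_2s_1\ne d_1s_2s_1$), so no identity-shuffling will bring all three to a common $s_0$. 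The genuine common index here is $2$, and to see it uniformly you must go \emph{past} the first available $s_0$ in the cases $j=1,3$.

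The paper's key observation, which replaces your hoped-for reconciliation step, is that the correct common index is $\kappa(i)-1$, where $\kappa(i)$ is the least $k$ with $i_k=0$. With this formulation the induction on $n$ is immediate: if $i_1=0$ then every $j>0$ lands in the third branch of \eqref{dji}, giving $D^{n+1}_{j;\,i}=s_0\,D^{n}_{j-1;\,i_2,\dots,i_n}$ directly; if $i_1>0$ then $\kappa(i_2,\dots,i_n)=\kappa(i)-1$, so by the inductive hypothesis each $D^{n}_{j';\,i_2,\dots,i_n}$ with $j'>0$ is $s_{\kappa(i)-2}$-degenerate, hence its derived operator is $s_{\kappa(i)-1}$-degenerate, and in every branch of \eqref{dji} the relevant lower index ($j$ or $j-1$) stays positive. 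That is the entire proof---no term-by-term reconciliation is needed.
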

\begin{proof}
For any sequence $i\in S_n$ we will show by induction 
that $D^{n+1}_{j;\,i}$ 
is $s_{\kappa(i)-1}$-degenerate for all
$j>0$, where $\kappa(i)$ is the least integer such that $i_{\kappa(i)}=0$. If  $i_1=0$,
so that $\kappa(i)=1$, then by~(\ref{dji})
$$D^{n+1}_{j;\,i}\;\;=\;\;
D^{n\,\,\prime}_{j-1;\,i_2,\ldots,i_n}\,s_0\;\;=\;\;
s_0D^{n}_{j-1;\,i_2,\ldots,i_n}$$
for all $j>i_1=0$.
If  $i_1>0$, so that $\kappa(i)>1$, then
$\kappa(i_2,\ldots,i_n)=\kappa(i)-1$ and
we know that $D^n_{j;\,i_2,\ldots,i_n}$ (if $j>0$) and
$D^n_{j-1;\,i_2,\ldots,i_n}$ (if $j>1$)
are $s_{\kappa(i)-2}$-degenerate 
by the inductive hypothesis. The
corresponding derived operators are therefore $s_{\kappa(i)-1}$-degenerate
and, by~(\ref{dji}), 
so is $D^{n+1}_{j;\,i}$ for all $j>0$.  
\end{proof}

\section{The retraction map}

\subsection{Definition of the map}

Let $X$ be a $0$-reduced simplicial set. We introduce in this section a
map of differential graded algebras
$$
\psi:CGX\longrightarrow \ccobar X
$$
between the chains on the loop group and the extended cobar
construction, which is  a retraction of the Szczarba map $\phi$.  The map $\psi$ is uniquely determined by the relation
\begin{eqnarray}\label{psixg}
\psi_n(\tau x\cdot g)
&=&
\psi_n(g) \;- \sum_{i=0}^{n} s^{-1}x_{(0,\dots,i+1)}\otimes \psi_{n-i}(\tau d_1^ix\cdot d_0^ig)
\end{eqnarray}
for $x\in X_{n+1}$ and $g\in (GX)_n$.
Note that the $i=0$ term on the {right-hand} side is
$s^{-1}x_{(0,1)}\otimes \psi_n(\tau x\cdot g)$. 
In fact $\psi$ may be expressed inductively, on the degree $n$ and the
word length in $(GX)_n$.

\begin{lemma}\label{inducform}
The definition of $\psi$ in \eqref{psixg} may be rewritten as
\begin{align*}
&\psi_n(\tau x\cdot g)=
\invert{x_{(0,1)}}
\otimes\biggl(\psi_n(g) -
\sum_{i=1}^{n} s^{-1}x_{(0,\dots,i+1)}\otimes \psi_{n-i}\finalom ixg\biggr),
\\
&\psi_n(\tau x^{-1}\cdot h)=
(1+s^{-1}x_{(0,1)})\otimes\psi_n(h) + 
\sum_{i=1}^{n} s^{-1}x_{(0,\dots,i+1)}\otimes \psi_{n-i}\finalombar ixh .
\end{align*}
where
\begin{align*}
\finalom ixg&=\tau d_1^ix\cdot d_0^ig\;\in\;(GX)_{n-i},\\
\finalombar ixh&=\finalom ix{\tau x^{-1}\cdot h}=\tau d_0d_2^{i-1}x\cdot d_0^ih.
\end{align*}
\end{lemma}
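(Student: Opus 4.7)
The plan is to derive both formulas from the defining relation \eqref{psixg} by elementary algebraic rearrangement; the first is essentially immediate, while the second requires in addition one iterated application of the twisting-function axioms.

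For the first formula, I would isolate the $i=0$ term on the right-hand side of \eqref{psixg}. Since $d_1^0 x = x$ and $d_0^0 g = g$, this term equals $s^{-1}x_{(0,1)}\otimes \psi_n(\tau x\cdot g)$. Transposing it to the left rewrites \eqref{psixg} as
\[
(1+s^{-1}x_{(0,1)}) \otimes \psi_n(\tau x \cdot g) \;=\; \psi_n(g) - \sum_{i=1}^n s^{-1}x_{(0,\dots,i+1)} \otimes \psi_{n-i}\finalom i x g.
\]
Since $\invert{x_{(0,1)}}$ is by construction the two-sided inverse of $1+s^{-1}x_{(0,1)}$ in the degree-zero subalgebra $A\subset \ccobar X$, left-multiplying both sides by $\invert{x_{(0,1)}}$ yields the first formula.

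For the second, I would substitute $g = (\tau x)^{-1}\cdot h$ into \eqref{psixg}. The left-hand side collapses to $\psi_n(h)$; solving for $\psi_n((\tau x)^{-1}\cdot h)$ and peeling off the $i=0$ summand (which simplifies to $s^{-1}x_{(0,1)}\otimes\psi_n(h)$) yields an expression of the required shape, provided that for each $i\ge1$
\[
\finalombar i x h \;=\; \tau d_1^i x\cdot d_0^i\bigl((\tau x)^{-1}\cdot h\bigr) \;=\; \tau d_0 d_2^{i-1} x \cdot d_0^i h.
\]
Cancelling the common factor $d_0^i h$ reduces this to the group identity $d_0^i \tau x = \tau(d_0 d_2^{i-1} x)^{-1}\cdot \tau d_1^i x$ in $(GX)_{n-i}$.

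This last identity will be the main obstacle, and I would prove it by induction on $i$. The base case $i=1$ is exactly the twisting-function axiom $d_0\tau x = \tau(d_0 x)^{-1}\cdot \tau d_1 x$. For the inductive step, I would apply $d_0$ (as a group homomorphism) to the formula at stage $i$ and re-expand each factor of the form $d_0\tau(-)$ by the same axiom; the two inner factors then cancel once one checks $d_0 d_0 d_2^{i-1} x = d_0 d_1^i x$, a short bookkeeping exercise in the simplicial identities $d_0 d_1 = d_0 d_0$ and $d_0 d_2 = d_1 d_0$. The surviving outer factors reassemble into $\tau(d_0 d_2^i x)^{-1}\cdot \tau d_1^{i+1} x$, closing the induction.
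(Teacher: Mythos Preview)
Your proposal is correct and follows essentially the same approach as the paper: isolate the $i=0$ term and invert $1+s^{-1}x_{(0,1)}$ for the first formula, then substitute $g=\tau x^{-1}\cdot h$ for the second. The only cosmetic difference is that the paper substitutes into the already-derived first formula rather than back into \eqref{psixg}, which saves re-handling the $i=0$ term; and the paper simply asserts the simplification $\finalom ix{\tau x^{-1}\cdot h}=\tau d_0d_2^{i-1}x\cdot d_0^ih$ without the inductive verification you supply.
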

\begin{proof}
Collecting the terms in \eqref{psixg} involving $\psi_n(\tau x\cdot g)$
and dividing by $1+s^{-1}x_{(0,1)}$ gives the first equation.
The second is obtained by taking $g=\tau x^{-1}\cdot h$ in the first.
\end{proof}
From these formulae it is straightforward to give the map $\psi$
explicitly in low degrees.
\begin{lemma}
The map
$\psi_0:(GCX)_0\to(\ccobar X)_0$ agrees with that defined in
Proposition \ref{degreezero}, and the map
$\psi_1:(GCX)_1\to(\ccobar X)_1$ is given
for $x,x_i\in X_2$ and $\alpha,\alpha_i=\pm1$ 
by
\begin{eqnarray*}
\psi_1(\tau x_1^{\alpha_1}\dots\tau x_r^{\alpha_r})&=&
\sum_{i=1}^r
\psi_0d_1(\tau x_1^{\alpha_1}\dots\tau x_{i-1}^{\alpha_{i-1}})
\otimes \psi_1(\tau x_i^{\alpha_i})         \otimes
\psi_0d_0(\tau x_{i+1}^{\alpha_{i+1}}\dots\tau x_{r}^{\alpha_{r}})
\\\textrm{with }\psi_1(\tau x^{\alpha})
&=&\left\{\begin{array}{rl}
-\psi_0(\tau x_{(0,1)})\otimes s^{-1}x\otimes\psi_0(\tau x_{(0,2)})
&(\alpha=+1)\\
s^{-1}x\otimes\psi_0(\tau x_{(1,2)})
&(\alpha=-1).
\end{array}\right.
\end{eqnarray*}
\end{lemma}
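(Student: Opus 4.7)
The plan is to read off $\psi_0$ and $\psi_1$ directly from the recursive formulas, proceeding by induction on the length $r$ of the word in each case.

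For $\psi_0$, specialise \eqref{psixg} to $n=0$: for $x\in X_1$ and $g\in(GX)_0$ the sum collapses to its single $i=0$ term, since $x_{(0,1)}=x$, $d_1^0x=x$ and $d_0^0g=g$. This gives $\psi_0(\tau x\cdot g)=\psi_0(g)-s^{-1}x\otimes\psi_0(\tau x\cdot g)$, which rearranges to $\psi_0(\tau x\cdot g)=\invertb{x}\otimes\psi_0(g)$. Iterating on $r$ makes $\psi_0$ multiplicative on positive words, and setting $g=\tau x^{-1}$ (so that $\tau x\cdot g=1$) forces $\invertb{x}\otimes\psi_0(\tau x^{-1})=1$, hence $\psi_0(\tau x^{-1})=1+s^{-1}x$. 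These values coincide with those of Proposition~\ref{degreezero}.

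For $\psi_1$ the base case $r=1$ follows by applying Lemma~\ref{inducform} with $g=h=1\in(GX)_1$ in degree $n=1$: only the $i=1$ term of each sum survives, and using the identifications $d_1 x=x_{(0,2)}$ and $d_0 x=x_{(1,2)}$ one recovers the two displayed formulas for $\psi_1(\tau x^\alpha)$. For the inductive step I would write $g_0=\tau x_1^{\alpha_1}\cdot g_1$ with $g_1=\tau x_2^{\alpha_2}\cdots\tau x_r^{\alpha_r}$ and apply whichever line of Lemma~\ref{inducform} matches the sign $\alpha_1$. Using axiom~\eqref{twistinglast} the leading factor $\invertb{x_{1,(0,1)}}$ (respectively $1+s^{-1}x_{1,(0,1)}$ when $\alpha_1=-1$) is $\psi_0 d_1(\tau x_1^{\alpha_1})$; since $d_1$ is a simplicial group homomorphism and $\psi_0$ is a ring homomorphism in degree zero, this factor combined with the inductive expression for $\psi_1(g_1)$ produces precisely the summands with $i\geq 2$ in the claimed formula, now with prefix $\tau x_1^{\alpha_1}\cdot\tau x_2^{\alpha_2}\cdots\tau x_{i-1}^{\alpha_{i-1}}$. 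The remaining degree-one term expands, via $\psi_0(\tau y\cdot d_0 g_1)=\psi_0(\tau y)\otimes\psi_0 d_0(g_1)$ with $y=d_1 x_1$ or $y=d_0 x_1$ as appropriate, into $\psi_1(\tau x_1^{\alpha_1})\otimes\psi_0 d_0(g_1)$, which is exactly the missing $i=1$ summand.

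The only real obstacle is bookkeeping: one must repeatedly invoke the twisting-function axioms to convert face operators on $\tau$-generators into $\tau$ of face-operated simplices, and then use the multiplicativity of $\psi_0$ (from Proposition~\ref{degreezero}) to push products through. Once these translations are made the verification is purely mechanical.
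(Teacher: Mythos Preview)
Your proposal is correct and follows exactly the route the paper intends: the lemma is stated without proof, preceded only by the remark that ``from these formulae it is straightforward to give the map $\psi$ explicitly in low degrees,'' and your argument is precisely that straightforward unwinding of Lemma~\ref{inducform} (and of \eqref{psixg} in degree zero). The only point you leave implicit is that $\psi_1(1)=0$ because the identity of $(GX)_1$ is $s_0$-degenerate, which is why the term $\psi_1(g)$ (respectively $\psi_1(h)$) outside the sum drops out in the base case; you might make this explicit, but otherwise the bookkeeping is sound.
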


\begin{lemma}
$\psi$  is well-defined.
\end{lemma}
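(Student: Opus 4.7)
We must verify that the recursive relation \eqref{psixg} unambiguously determines a map $\psi_n: (CGX)_n \to (\ccobar X)_n$ for each $n$. The strategy is a double induction, outer on the simplicial degree $n$ and inner on the length $r$ of a word in the free group $\mathbf{F}(X_{n+1})$, together with two compatibility checks. The base case $n=0$ is Proposition~\ref{degreezero}. For the inductive step, the first or second formula of Lemma~\ref{inducform} applies according as the leftmost letter of the word is $\tau x$ or $\tau x^{-1}$; each strictly reduces either the word length (the $\psi_n(g)$ or $\psi_n(h)$ term) or the simplicial degree (the remaining $\psi_{n-i}$ terms with $i\ge 1$), so the recursion terminates and defines $\psi_n$ on every word in $\mathbf{F}(X_{n+1})$.

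Next I would show the resulting value depends only on the class of the word in $(GX)_n = \mathbf{F}(X_{n+1})/\mathbf{F}(s_0 X_n)$. For the free-group cancellation $\tau x\cdot\tau x^{-1} = 1$: applying the first formula of Lemma~\ref{inducform} to $\tau x \cdot (\tau x^{-1}\cdot h)$ and then substituting the second formula for the nested $\psi_n(\tau x^{-1}\cdot h)$, the sums over $i\ge 1$ cancel term by term, and the remaining factor $\invert{x_{(0,1)}}\otimes(1+s^{-1}x_{(0,1)})$ equals~$1$, leaving $\psi_n(h)$ as required. For the simplicial collapse $\tau s_0 y = 1$: when $x=s_0 y$, a direct calculation using the simplicial identities shows that $x_{(0,\dots,i+1)} = s_0 y_{(0,\dots,i)}$ is degenerate in $X_{i+1}$ for every $i\ge 0$, so $s^{-1}x_{(0,\dots,i+1)}$ vanishes in the normalised chains underlying $\ccobar X$ and $\invert{x_{(0,1)}}=1$; the first formula of Lemma~\ref{inducform} then collapses to $\psi_n(\tau s_0 y\cdot g)=\psi_n(g)$. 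Invariance under either relation applied at an arbitrary position in a word follows by using the recursion to peel off the prefix, reducing to the leftmost case.

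Finally, since both the domain $CGX$ and the codomain $\ccobar X$ use normalised chains, I should verify that $\psi_n$ vanishes on any degenerate element $s_k g$ of $(GX)_n$. Using the simplicial identity $s_k\tau x = \tau s_{k+1}x$, I would expand the recursion on such an element and argue inductively that every resulting summand either contains a factor $s^{-1}c$ with $c$ a degenerate simplex of $X$ or is a lower-degree $\psi_{n-i}$ evaluated on a degenerate element of $(GX)_{n-i}$. The principal obstacle is this last step: tracking which iterated faces of $s_{k+1}x$ become degenerate under the operators $d_0^i$ and $d_1^i$ appearing in Lemma~\ref{inducform} requires careful bookkeeping with the simplicial identities.
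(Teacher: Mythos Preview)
Your plan is correct and in fact more thorough than the paper's proof, which addresses only the last of your three checks. The paper takes the view that $\psi$ is defined directly on elements of $(GX)_n$: since $(GX)_n$ is free on $X_{n+1}\smallsetminus s_0X_n$, every element has a unique reduced word, and the recursion of Lemma~\ref{inducform} strictly decreases $(n,\text{word length})$ lexicographically, so no separate verification of free-group cancellation or of the relation $\tau s_0y=1$ is needed. Your treatment of these two relations is nonetheless correct (and indeed the second formula of Lemma~\ref{inducform} is derived from the first by exactly the cancellation computation you describe).

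For the one check the paper does carry out---vanishing on degenerates---the bookkeeping you flag as the principal obstacle is resolved by a clean dichotomy on the index $i$ versus the degeneracy index $j$. Writing $w=s_j(\tau x\cdot g)=\tau s_{j+1}x\cdot s_jg$ and applying Lemma~\ref{inducform}, one gets the term $\psi_n(s_jg)$, which vanishes by induction on word length, together with a sum over $1\le i\le n$ of terms
\[
s^{-1}(s_{j+1}x)_{(0,\dots,i+1)}\otimes\psi_{n-i}\finalom i{s_{j+1}x}{s_jg}.
\]
For $j<i$ the front face $(s_{j+1}x)_{(0,\dots,i+1)}=s_{j+1}\bigl(x_{(0,\dots,i)}\bigr)$ is degenerate in $X$; for $j\ge i$ the simplicial identities $d_1^is_{j+1}=s_{j+1-i}d_1^i$ and $d_0^is_j=s_{j-i}d_0^i$ give $\finalom i{s_{j+1}x}{s_jg}=s_{j-i}\finalom ixg$, which is degenerate in $GX$, so $\psi_{n-i}$ of it vanishes by induction on $n$. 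Since $1+s^{-1}x_{(0,1)}$ is a unit, $\psi_n(w)=0$ follows. The case $\alpha=-1$ is handled by the same split applied to $\finalombar i{s_{j+1}x}{s_jh}$.
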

\begin{proof}
We show that $\psi_n(w)=0$ if $w$ is degenerate, by induction on $n$ and on word length in $GX$.
Suppose  $0\le j\le n-1$ and
$$w=s_j(\tau x^\alpha\cdot g),$$ 
where $\alpha=\pm1$ and $\tau x^\alpha\cdot g$ is a reduced word in $(GX)_{n-1}$.
For $\alpha=+1$ we have
\begin{align*}
(1+s^{-1}x_{(0,1)}&)\otimes\psi_n(w)=(1+s^{-1}x_{(0,1)})\otimes\psi_n(\tau s_{j+1}x\cdot s_jg)
\\
&=\psi_ns_jg-\sum_{i=1}^{n}(s_{j+1}x)_{(0,\dots,i+1)}
\otimes\psi_{n-i}\finalom i{s_{j+1}x}{s_jg}
\end{align*}
in which the first term is zero inductively. 
Each term in the summation is also zero since 
$(s_{j+1}x)_{(0,\dots,i+1)}$ is degenerate for $j<i$ and
$\finalom i{s_{j+1}x}{s_jg}=s_{j-i}\finalom ixg$ for $j\geq i$.
Since $1+s^{-1}x_{(0,1)}$ is invertible, we have $\psi_n(w)=0$. 

The argument for $\alpha=-1$ is similar.
\end{proof}

\subsection{Properties of the retraction map}

We now prove that $\psi$ is a morphism of differential graded algebras and a retraction of the Szczarba map $\phi$.

\begin{proposition}\label{chnmap} 
$\psi$
is  a chain map.
\end{proposition}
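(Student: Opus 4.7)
The plan is to proceed by double induction, first on the degree $n$ and then on the word length of elements of $(GX)_n$ written as reduced words in the generators $\tau y^{\pm 1}$. In degrees $n \leq 0$ the claim is trivial. For $n \geq 1$ it will suffice, by the inductive hypothesis on lower degrees and on shorter words, to verify $\partial^\Omega \psi_n(w) = \psi_{n-1}\partial_n(w)$ on a reduced word $w = \tau x^\alpha \cdot g$ with $\alpha = \pm 1$, $x \in X_{n+1}$, and $g \in (GX)_n$ of strictly shorter length than $w$.

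I would expand $\psi_n(w)$ using Lemma \ref{inducform} and apply $\partial^\Omega$ via the Leibniz rule together with the explicit formula
\[
\partial^\Omega s^{-1}c \;=\; -s^{-1}dc + (-1)^{|c_i|}s^{-1}c_i\otimes s^{-1}c^i.
\]
This splits the result into (a) terms in which $\partial^\Omega$ falls on a prefix $s^{-1}x_{(0,\ldots,i+1)}$, producing either $-s^{-1}dx_{(0,\ldots,i+1)}$ or an Alexander--Whitney splitting, and (b) terms in which $\partial^\Omega$ passes through to a factor $\psi_{n-i}(\finalom i x g)$, which by the inductive hypothesis in degree $n-i < n$ equals $\psi_{n-i-1}\partial$ applied to $\finalom i x g$. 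Meanwhile I would expand $\partial_n w = \sum_k (-1)^k d_k\tau x^\alpha \cdot d_k g$ using the twisting-function relations (\ref{twisting2})--(\ref{twistinglast}): the faces $d_k\tau x = \tau d_{k+1}x$ for $k \geq 1$ should supply the internal boundary terms $-s^{-1}dx_{(0,\ldots,i+1)}$, while $d_0\tau x = \tau d_0x^{-1}\cdot \tau d_1x$ produces the splitting matching the Alexander--Whitney contributions, much as in the chain-map argument for $\phi$ in the proof of Theorem \ref{thm:defn-phi}. A further application of Lemma \ref{inducform} to each term $\psi_{n-1}(\tau d_{k+1} x^\alpha\cdot d_k g)$ should reassemble the prefixes required to match side (a).

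The main obstacle will be the sign and indexing bookkeeping, in particular the case split on $\alpha = \pm 1$. For $\alpha = -1$ the formula of Lemma \ref{inducform} carries a manifest tensor prefix $1 + s^{-1}x_{(0,1)}$ on which $\partial^\Omega$ acts by Leibniz. For $\alpha = +1$ the prefix is the ring element $\invert{x_{(0,1)}} \in A$, on which $\partial^\Omega$ vanishes; the cleanest way to handle this is to multiply the desired identity through by $1 + s^{-1}x_{(0,1)}$ and verify it termwise, or alternatively to reduce to the $\alpha = -1$ case by applying the latter to $\tau x^{-1}\cdot \tau x \cdot g = g$ and invoking the well-definedness of $\psi$ already established.
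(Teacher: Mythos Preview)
Your proposal is correct and follows essentially the same route as the paper: double induction on degree and word length, expansion via Lemma~\ref{inducform}, multiplication through by $1+s^{-1}x_{(0,1)}$ to clear the inverse, Leibniz splitting of $\partial^\Omega$ into prefix and tail contributions, and comparison with $\psi_{n-1}\partial_n(\tau x\cdot g)$ using the twisting relations. The paper treats only the case $\alpha=+1$ explicitly; the one organisational device you do not mention is that it first isolates the identity
\[
\psi_{n-1}\bigl(d_0(\tau x\cdot g)-\tau d_1x\cdot d_0g\bigr)\;=\;\sum_{k=1}^{n} s^{-1}x_{(1,\dots,k+1)}\otimes\psi_{n-k}\finalom kxg
\]
(and its shifted analogue for $\finalom{r-1}xg$), which packages exactly the $d_0$ contribution and makes the subsequent cancellations transparent.
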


\begin{proof}
We will show that for all $x\in X_{n+1}$ and $g\in (GX)_n$,
\[
\partial_n^\Omega\,\psi_n
(\tau x\cdot g)
\;\;=\;\;\psi_{n-1}\,\partial_n(\tau x\cdot g),
\]
by induction on $n$ and on word length in $GX$. We first observe that
\begin{align*}
&\psi_{n-1}(d_0(\tau x\cdot g)-\tau d_1x\cdot d_0g)\;=\;
\psi_{n-1}(\tau d_0x^{-1}\cdot\tau d_1x\cdot d_0g)-\psi_{n-1}(\tau d_1x\cdot d_0g)
\\
&=s^{-1}x_{(1,2)} \otimes \psi_{n-1}(\tau d_1x\cdot d_0g)
+ \sum_{i=1}^{n-1} s^{-1}x_{(1,\dots,i+2)} \otimes \psi_{n-1-i}\finalombar i{d_0x}{\tau d_1x\cdot d_0g},
\end{align*}
using the second formula in Lemma~\ref{inducform}. 
Now since 
$\tau d_1x\cdot d_0g=\finalom1xg$, and also
$\finalombar i{d_0x}{\tau d_1x\cdot d_0g} =
\finalombar i{d_1x}{\tau d_1x\cdot d_0g} =\finalom i{d_1x}{d_0g}=\finalom {i+1}xg$, 
we get
\begin{align}
\psi_{n-1}(d_0(\tau x\cdot g)-\tau d_1x\cdot d_0g)&\;=\;
\sum_{k=1}^{n} s^{-1}x_{(1,\dots,k+1)}\otimes\psi_{n-k}\finalom kxg
\label{rel1}
\end{align}
and, substituting $d_1^{r-1}x$ and $d_0^{r-1}g$ for $x$ and $g$
respectively, we obtain
\begin{align}
\psi_{n-r}(d_0\finalom {r-1}xg
-\finalom rxg)&\;=\;\label{rel2}
\sum_{k=r}^{n} s^{-1}x_{(r,\dots,k+1)}\otimes\psi_{n-k}\finalom kxg.
\end{align}
Now, using Lemma~\ref{inducform}, we know that
\begin{align*}
(1+s&^{-1}x_{(0,1)})\otimes\partial^\Omega_n\psi_n(\tau x\cdot g)
\;\;=\;\;\partial^\Omega_n\bigl((1+s^{-1}x_{(0,1)})\otimes \psi_n(\tau x\cdot g)\bigr)
\\
=&\;\;
\partial^\Omega_n\biggl(
-
\sum_{i=1}^ns^{-1}x_{(0,\dots,i+1)}
\otimes \psi_{n-i}\finalom ixg
\;\;+\psi_ng\biggr)
\\
=&\;-\;\sum_{i=1}^n\partial^\Omega_is^{-1}x_{(0,\dots,i+1)}\otimes\psi_{n-i}\finalom ixg\\
\;-\;&\sum_{i=1}^{n-1}(-1)^is^{-1}x_{(0,\dots,i+1)}\otimes\psi_{n-1-i}\partial_{n-i}
\finalom ixg\;\;+\;\;\psi_{n-1}\partial_ng,
\end{align*}
since inductively 
$\partial^\Omega_{n-i}\psi_{n-i}=\psi_{n-i-1}\partial_{n-i}$
and
$\partial_n^\Omega\psi_ng=\psi_{n-1}\partial_ng$. Now expanding the
operators $\partial^\Omega$ and $\partial$ we get
\begin{align}\nonumber
(&1+s^{-1}x_{(0,1)})\otimes\partial^\Omega_n\psi_n(\tau x\cdot g)
\;\;=\\\nonumber 
&
\;-\;\sum_{i=1}^n
\sum_{r=0}^{i+1}
(-1)^{r+1}s^{-1}x_{(0,\dots,\hat r,\dots i+1)}\otimes\psi_{n-i}\finalom ixg
\\\label{big}
&\;-\;\sum_{i=1}^n
\sum_{q=1}^{i}(-1)^{q}s^{-1}x_{(0,\dots,q)}\otimes s^{-1}x_{(q,\dots,i+1)}\otimes\psi_{n-i}\finalom ixg
\\\nonumber
&\;-\;
\sum_{j=1}^{n-1}
\sum_{t=0}^{n-j}
(-1)^{j+t} s^{-1}x_{(0,\dots,j+1)}\otimes\psi_{n-1-j}d_t
\finalom jxg\;+\;\sum_{k=0}^n(-1)^k\psi_{n-1}d_kg.
\end{align}
Collecting together the terms for which either $i=1$, $k=0$, $r=0$, or $q=1$ gives
\begin{align*}
\biggl((&1+s^{-1}x_{(0,1)})-(1+s^{-1}x_{(0,2)})\biggr)\otimes \psi_{n-1}\finalom1xg\;\;\;+\;\;\psi_{n-1}d_0g
\\+&
\sum_{i=1}^n (1+s^{-1}x_{(0,1)})\otimes s^{-1}x_{(1,\dots i+1)}\otimes\psi_{n-i}\finalom ixg
\\
=\;&(1+s^{-1}x_{(0,1)})\otimes\psi_{n-1}d_0(\tau x\cdot g)
-(1+s^{-1}x_{(0,2)})\otimes \psi_{n-1}\finalom1xg
+\psi_{n-1}d_0g
\end{align*}
by \eqref{rel1}, and by Lemma \ref{inducform} the last two terms here
cancel exactly with the terms for $r=1$ and $i>1$ in \eqref{big}.

Now collecting the terms for $r=i+1$ and $i>1$ in \eqref{big},
together with all the $(i,q)$-indexed terms not already considered, gives
\begin{align*}
&-
\sum_{i=2}^n
\biggl(
(-1)^is^{-1}x_{(0,\dots,i)}
+\sum_{q=2}^i(-1)^{q}s^{-1}x_{(0,\dots,q)}\otimes s^{-1}x_{(q,\dots,i+1)}
\biggr)\otimes\psi_{n-i}\finalom ixg
\\
&=-
\sum_{r=2}^{n}(-1)^{r}
s^{-1}x_{(0,\dots,r)}
\otimes 
\biggl(\psi_{n-r}\finalom{r}xg
+\sum_{k=r}^n
s^{-1}x_{(r,\dots,i+1)}
\otimes\psi_{n-i}\finalom ixg
\biggr)
\\
&=-
\sum_{r=2}^{n}(-1)^{r}
s^{-1}x_{(0,\dots,r)}
\otimes \psi_{n-r}d_0\finalom {r-1}xg
\end{align*}
by \eqref{rel2}, and this cancels exactly  with the terms for $t=0$ in
\eqref{big}.

Thus expression \eqref{big} is equal to
\begin{align}\nonumber
&(1+s^{-1}x_{(0,1)})\otimes\psi_{n-1}d_0(\tau x\cdot g)
\;+\;\sum_{k=1}^n(-1)^k\psi_{n-1}d_kg\\\label{big2}
&
\;-\;\sum_{2\le r\le i\le n}
(-1)^{r+1}s^{-1}x_{(0,\dots,\hat r,\dots i+1)}\otimes\psi_{n-i}\finalom ixg
\\\nonumber
&\;-\;
\sum_{1\le j<j+t\le n}
(-1)^{j+t} s^{-1}x_{(0,\dots,j+1)}\otimes\psi_{n-1-j}d_t
\finalom jxg.
\end{align}
Now to complete the proof it remains to show that this is equal to
\begin{align*}
(1+s^{-1}x_{(0,1)})\otimes
\psi&_{n-1}\partial_n(\tau x\cdot g)
\;\;=\;\;(1+s^{-1}x_{(0,1)})\otimes\psi_{n-1}d_0(\tau x\cdot g)\\
&+\sum_{i=1}^n (-1)^i(1+s^{-1}x_{(0,1)})\otimes
\psi_{n-1}(\tau d_{i+1}x\cdot d_ig).
\end{align*}
The first term is as required, and by Lemma \ref{inducform} the
summation is
$$
\sum_{i=1}^n
(-1)^i\biggl(\psi_{n-1}d_ig-\sum_{k=1}^{n-1}s^{-1}(d_{i+1}x)_{(0,\dots,k+1)}\otimes
\psi_{n-1-k}\finalom k{d_{i+1}x}{d_ig}
\biggr).
$$
The result therefore follows from the observations that
$$
(d_{i+1}x)_{(0,\dots,k+1)}\!=\!\begin{cases}x_{(0,\dots,\widehat{i+1},\dots,k+2)}
&\\x_{(0,\dots,k+1)}&\end{cases}\;\;
\finalom k{d_{i+1}x}{d_ig}\!=\!\begin{cases}\finalom{k+1}xg&\;\;(i\le
  k),\\d_{i-k}\finalom kxg&\;\;
(i>k).\end{cases}
$$
\end{proof}

\begin{proposition}
The map $\psi$ is an algebra homomorphism.
\end{proposition}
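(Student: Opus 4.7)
We wish to show $\psi_n\bigl(m(g\otimes h)\bigr)=\psi_p(g)\otimes\psi_q(h)$ for all $g\in(GX)_p$ and $h\in(GX)_q$ with $p+q=n$. The plan is a double induction: outer on the degree $n$, and inner on the word length of $g$ in the free-group presentation of $(GX)_p$ by the generators $\tau x^{\pm 1}$.

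The outer base case $n=0$ is Proposition \ref{degreezero}: in degree zero the shuffle product coincides with the group multiplication in $(GX)_0$, and $\psi_0$ was explicitly shown to convert such products into concatenations. For the outer induction step, the inner base case $g=1$ is immediate from $m(1\otimes h)=h$ and $\psi(1)=1$. For the inner induction step we write $g=\tau x\cdot g'$, with $g'$ of strictly shorter word length (the case $g=\tau x^{-1}\cdot g'$ is treated analogously using the second formula of Lemma \ref{inducform}). Using that each degeneracy is a group homomorphism, expand the shuffle as
\[
m(\tau x\cdot g'\otimes h)\;=\;\sum_{(\mu,\nu)\in\Shuff(p,q)}(-1)^{\sgn(\mu,\nu)}\,s_\mu\tau x\cdot s_\mu g'\cdot s_\nu h.
\]
By the twisting identity \eqref{twisting2}, each summand begins with a generator $\tau(s'_\mu x)$, so the recursion of Lemma \ref{inducform} applies to each term. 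Partition the resulting double sum (over shuffles and over the index $i$ in the recursion) according to the length of the initial segment of $\mu$ equal to $(0,1,\dots,i-1)$; this records where the initial $\tau x$ is ``stripped off'' versus interleaved with letters of $g'$ and $h$. Using the vanishing of $\psi$ on degenerate elements (the well-definedness of $\psi$) to eliminate the interleaved contributions, together with the outer inductive hypothesis applied in degrees $n-i$ and the inner inductive hypothesis applied to the shorter word $g'$, the surviving terms reassemble into
\[
\psi(\tau x)\otimes\psi\bigl(m(g'\otimes h)\bigr)\;=\;\psi(\tau x)\otimes\psi(g')\otimes\psi(h)\;=\;\psi(g)\otimes\psi(h).
\]

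The main obstacle I anticipate is the combinatorial bookkeeping of the shuffle sum: precisely matching the recursive summands $s^{-1}x_{(0,\dots,i+1)}\otimes\psi_{n-i}\finalom ix{-}$ of Lemma \ref{inducform} against an explicit family of $(p,q)$-shuffles, and verifying that the remaining contributions are annihilated by the normalisation. I expect this to require a shuffle-reindexing bijection in the combinatorial spirit of Lemma \ref{szrels}, with careful sign tracking, and to be the most technical step of the proof.
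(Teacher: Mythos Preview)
Your overall strategy matches the paper's: induct on degree and on word length, peel off a leading generator $\tau x$ via Lemma~\ref{inducform}, and identify which shuffle summands survive by testing when the front face $(s_\mu'x)_{(0,\dots,i+1)}$ is degenerate. One minor slip: that face is nondegenerate precisely when every $\mu$-index is $\geq i$ (equivalently, the positions $0,\dots,i-1$ all belong to~$\nu$), which is the opposite of your stated partition on ``initial segments of~$\mu$''.

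The genuine gap is in your claimed endgame. You assert that the surviving terms reassemble into
\[
\psi(\tau x)\otimes\psi\bigl(m(g'\otimes h)\bigr)\;=\;\psi(\tau x)\otimes\psi(g')\otimes\psi(h)\;=\;\psi(g)\otimes\psi(h).
\]
Both equalities fail already on dimensional grounds: with $g=\tau x\cdot g'\in(GX)_p$ we also have $g'\in(GX)_p$, so $\psi(\tau x)\otimes\psi(g')$ lives in degree $2p$, not~$p$. The recursion of Lemma~\ref{inducform} does \emph{not} say $\psi(\tau x\cdot g')=\psi(\tau x)\otimes\psi(g')$; it mixes faces of $x$ with faces of $g'$ through the terms $\finalom ix{g'}$, and there is no sense in which the leading letter ``factors out'' of~$\psi$.

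What actually happens (and what the paper does) is this: after multiplying through by the invertible element $1+s^{-1}x_{(0,1)}$ and summing over shuffles, the surviving contributions organise as
\[
\psi_n\,m(g'\otimes h)\;-\;\sum_{i=1}^{p}\,s^{-1}x_{(0,\dots,i+1)}\otimes\psi_{n-i}\,m\bigl(\finalom i{x}{g'}\otimes h\bigr),
\]
because the surviving shuffles for fixed $i$ are exactly the $(p-i,q)$-shuffles applied to $\finalom i{x}{g'}\otimes h$. Now apply the inductive hypotheses (word length for the first term, degree $p-i<p$ for the summands) to factor $\psi_q(h)$ out on the right, and recognise the remaining bracket as $(1+s^{-1}x_{(0,1)})\otimes\psi_p(g)$ by reading Lemma~\ref{inducform} in reverse. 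Cancelling the invertible prefactor finishes the argument. Your plan needs this correction to close.
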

\begin{proof}
Let $v\in (GX)_p$ and $w\in(GX)_q$ and consider $v\otimes
w\in (C GX\otimes C GX)_n$, $n=p+q$.
To show that $\psi$ is multiplicative we must prove
$$\psi_n m(v\otimes w)\;=\;
\sum_{(\mu,\nu)}(-1)^{\sgn(\mu,\nu)}
\psi_{n}(s_\mu v\cdot s_\nu w)\;=\;
\psi_p v\otimes \psi_q w$$ 
in
$\ccobar X$, by induction on $p$ and the word length of $v$.
For $v=*$, or $p$ or $q=0$,
there is nothing to prove; suppose inductively that $p,q\ge1$ and $v=\tau x\cdot g$
for $x\in X_{p+1}$ (the
argument for $v=\tau x^{-1}\cdot g$ is similar). 
Then by Lemma \ref{inducform},
\begin{align*}
(1+s&^{-1}x_{(0,1)})\otimes
\psi_{n}(s_\mu (\tau x\cdot g)\cdot s_\nu w)
\;=\;
(1+s^{-1}x_{(0,1)})\otimes
\psi_{n}(\tau s_\mu'x\cdot (s_\mu g\cdot s_\nu w))
\\
=\;&\psi_{n}(s_\mu g\cdot s_\nu w)
-\!\sum_{i=1}^{n}
d_{i+2}^{n-i}s_\mu'x
\otimes \psi_{n-i}\finalom i{s_\mu'x}
{s_\mu g\cdot s_\nu w}
\end{align*}
The term $d_{i+2}^{n-i}s_\mu'x$ will be degenerate unless $i\le p$ and
$(s_\mu,s_\nu)$ is of the form
$(s_{i+\xi_q}s_{i+\xi_{q-1}}\ldots s_{i+\xi_1},\;s_0^is_\zeta)$ for
some $(p-i,q)$-shuffle $(\xi,\zeta)$, and we have
\begin{align*}
(1&{}+s^{-1}x_{(0,1)})
\otimes\psi_{n}m(v\otimes w)
\\=&
\sum_{(\mu,\nu)}(-1)^{\sgn(\mu,\nu)} \psi_{n}(s_\mu g\cdot s_\nu w)
-\!\!\sum_{\begin{subarray}{c}1\le i\le p\\(\xi,\zeta)\end{subarray}}\!
(-1)^{\sgn(\xi,\zeta)}d_{i+2}^{p-i}x\otimes \psi_{n-i}(s_\xi\finalom ixg\cdot s_\zeta w)
\\=&\;\;
\psi_nm(g\otimes w)
\;\;-\;\;\sum_{i=1}^p
d_{i+2}^{p-i}x
\;\otimes\; \psi_{n-i}m(\finalom ixg\otimes w)
\\=&\;\;
\biggl(\psi_pg
\;-\;\sum_{i=1}^p
d_{i+2}^{p-i}x
\;\otimes\; \psi_{p-i}\finalom ixg\biggr)\;\otimes\;\psi_q w,\quad \text{ by the
  inductive hypothesis,}
\\=&\;\;\;
(1+s^{-1}x_{(0,1)})\;\otimes\;\psi_p v\;\otimes\;\psi_q w,
\end{align*}
by Lemma \ref{inducform}. The result follows.
\end{proof}

\begin{proposition}\label{psiphi}
The map $\psi$ is a retraction of $\phi$, that is, the composite $\psi\phi$ is the identity.
\end{proposition}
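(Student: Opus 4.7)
Since both $\phi$ and $\psi$ are maps of differential graded algebras, the composite $\psi\phi$ is an endomorphism of $\ccobar X$ as a DGA, so it is enough to verify that it is the identity on algebra generators. In degree zero this is Proposition \ref{degreezero}; for $n\geq 1$ the remaining generators are the classes $s^{-1}c$ for $c\in X_{n+1}$ nondegenerate, so the task reduces to proving $\psi_n\phi_n(s^{-1}c)=s^{-1}c$. By Theorem \ref{thm:defn-phi} the left-hand side is $\sum_{i\in S_n}(-1)^{\sum i}\psi_n(\Sz_i c)$, and Lemma \ref{commondeg} furnishes the factorisation
\[
\Sz_i c\;=\;(\tau y_i)^{-1}\cdot u_i \qquad\text{in }G_n,\qquad y_i:=D^{n+1\,\prime}_{0;i}c,
\]
where $u_i$ is a product of $s_{\kappa(i)-1}$-degenerate elements, hence itself $s_{\kappa(i)-1}$-degenerate. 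A parallel iteration of \eqref{dji} shows that, for every $i\neq(0,\ldots,0)$, the normal form of $D^{n+1\,\prime}_{0;i}$ contains $s_m$ with $m:=\min\{k:i_k>0\}$, so $y_i$ is $s_m$-degenerate.

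The plan is to evaluate each $\psi_n(\Sz_i c)$ term-by-term using the second formula of Lemma \ref{inducform} with $x=y_i$ and $h=u_i$. The leading summand $(1+s^{-1}(y_i)_{(0,1)})\otimes\psi_n(u_i)$ vanishes because $\psi_n(u_i)=0$ by the well-definedness of $\psi$, and one analyses the terms $s^{-1}(y_i)_{(0,\ldots,j+1)}\otimes\psi_{n-j}\bar\omega_j(y_i,u_i)$ for $j=1,\ldots,n$. For $i=(0,\ldots,0)$, where $y_i=c$, the key observation is that $\bar\omega_j(c,u_0)=\tau d_0 d_2^{j-1}c\cdot d_0^j u_0$ admits a telescoping: repeated application of the twisting identity $d_0\tau y=(\tau d_0 y)^{-1}\cdot\tau d_1 y$, combined with the simplicial identity $d_0^2d_2^{\,l}=d_0^{\,l+2}$ for $l\ge 0$, collapses $d_0^j u_0$ to an element whose first factor is $(\tau d_0 d_2^{j-1}c)^{-1}$ followed by either $1$ (when $j=n$) or an $s_0$-degenerate tail (when $j<n$). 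Hence $\bar\omega_n(c,u_0)=1\in G_0$ and $\bar\omega_j(c,u_0)$ is $s_0$-degenerate for $j<n$, so $\psi_{n-j}$ annihilates all terms except $j=n$, which contributes $s^{-1}c\otimes\psi_0(1)=s^{-1}c$.

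For $i\neq(0,\ldots,0)$ one shows that every remaining term is zero. When $j\geq m$, the front-face operator $d_{j+2}\cdots d_{n+1}$ contains neither $d_m$ nor $d_{m+1}$, so it commutes past the $s_m$ in $y_i$, leaving $(y_i)_{(0,\ldots,j+1)}$ $s_m$-degenerate and therefore zero in $\ccobar X$. When $j<m$ (which occurs only if $i_1=\cdots=i_{m-1}=0$), a telescoping analogous to that above, applied to the sub-word of $u_i$ obtained after stripping its first $m-1$ factors, shows that $\bar\omega_j(y_i,u_i)$ is $s_{m-1}$-degenerate and hence killed by $\psi_{n-j}$. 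Summing over $S_n$ then leaves only the $i=(0,\ldots,0)$ contribution, giving $\psi_n\phi_n(s^{-1}c)=s^{-1}c$.

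The main technical obstacle is the $j<m$ case above: one must verify that the precise structure of $u_i$ dictated by \eqref{dji} interacts with the twisting telescoping to yield exactly an $s_{m-1}$-degenerate result, with no surviving nondegenerate residue. We expect that no pairwise cancellation between distinct $i\in S_n$ is required, so the entire argument reduces to these term-by-term degeneracy verifications together with the single nontrivial identity $\bar\omega_n(c,u_0)=1$ that produces the surviving copy of $s^{-1}c$.
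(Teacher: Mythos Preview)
Your route is essentially the paper's: reduce to algebra generators, and for each $i\in S_n$ use Lemma~\ref{commondeg} together with Lemma~\ref{inducform} to express $\psi_n(\Sz_i c)$ as
\[
\psi_n(\Sz_i c)\;=\;\sum_{j=1}^{n} s^{-1}(y_i)_{(0,\dots,j+1)}\otimes \psi_{n-j}\,\finalom j{y_i}{\Sz_i c},
\]
since your $\finalombar j{y_i}{u_i}=\finalom j{y_i}{\tau y_i^{-1}\cdot u_i}=\finalom j{y_i}{\Sz_i c}$. The analysis of the front factor $(y_i)_{(0,\dots,j+1)}$ for $j\ge m$ is fine and matches the paper's observation that $D^{n+1}_{0;i}$ carries an $s_{k-1}$-degeneracy whenever $i_k\neq 0$.

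Where your argument stalls is exactly the step you flag as the ``main technical obstacle''. You try to show $\finalombar j{y_i}{u_i}$ is degenerate by a telescoping computation inside the explicit word $u_i$, and you leave this incomplete. But no telescoping is needed. When $i_1=\cdots=i_j=0$ (which is precisely the case $j<m$), the first relation of Lemma~\ref{szrels} gives
\[
d_0^{\,j}\,\Sz_{0,\dots,0,i_{j+1},\dots,i_n}\,c \;=\; \Sz_{i_{j+1},\dots,i_n}\,d_1^{\,j}c,
\]
and the recursion \eqref{dji} (iterated through the leading zeros) gives $\tau d_1^{\,j} y_i = D^{\,n-j+1}_{0;\,i_{j+1},\dots,i_n}\,\tau d_1^{\,j}c$. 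Hence
\[
\finalom j{y_i}{\Sz_i c}\;=\;D^{\,n-j+1}_{0;\,i_{j+1},\dots,i_n}\,\tau d_1^{\,j}c\;\cdot\;\Sz_{i_{j+1},\dots,i_n}\,d_1^{\,j}c,
\]
which is \emph{exactly} the expression in Lemma~\ref{commondeg} applied to $d_1^{\,j}c$ and $(i_{j+1},\dots,i_n)\in S_{n-j}$, and is therefore degenerate. This single observation handles both your $i=(0,\dots,0)$ case (for $j<n$) and your $j<m$ case uniformly; your proposed telescoping and the claim of specifically $s_{m-1}$-degeneracy are unnecessary (and the latter is not quite right: the degeneracy index is $\kappa(i_{j+1},\dots,i_n)-1$, not $m-1$). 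For $j=n$ and $i=(0,\dots,0)$ one has $y_i=c$ and $\finalom n{c}{\Sz_{0,\dots,0}c}=\tau d_1^n c\cdot \Sz_{()}d_1^n c=1$, giving the surviving term $s^{-1}c$.
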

\begin{proof}
It is enough to prove this on algebra generators of $\ccobar X$.
For $x\in X_{n+1}$ and $i=(i_1,\dots,i_n)\in S_n$ we will show that
$$
\psi_n\Sz_ix\;\;=\;\begin{cases}x&\text{ if }i_1=\dots=i_n=0, 
\\ 0&\text{ otherwise.}\end{cases}
$$
Denote by $x_{0;\,i}$ the element of $X$ satisfying
$$D_{0;\,i}^{n+1}\tau x \;\;=\;\,\tau x_{0;\,i}.$$
Lemma~\ref{commondeg} 
tells us $\psi_n(\tau x_{0;\,i}\cdot\Sz_ix)=0$,
and so by Lemma~\ref{inducform} we have 
\begin{eqnarray*}
\psi_n\Sz_ix 
&=& \sum_{k=1}^{n}
d_{k+2}^{n-k}x_{0;\,i}
\otimes \psi_{n-k}\finalom k{x_{0;\,i}}{\Sz_ix}.
\end{eqnarray*}
From (\ref{dji}) we see that $D^{n+1}_{0,\;i}\tau x$ has an $s_{k-1}$ degeneracy
if $i_k\neq0$. Thus $d_{k+2}^{n-k}x_{0;\,i}$ is degenerate except in the case
$i_1=\ldots=i_k=0$. In this case we see from~(\ref{dji}) and Lemma~\ref{szrels} that
\begin{eqnarray*}
\finalom k{x_{0;\,i}}{\Sz_ix} &=&
\tau d_1^kx_{0;\,i}\;\cdot \;d_0^i\Sz_ix\;\;=\;\;
D_{0;\,i_{k+1},\ldots,i_n}^{n-k+1}\tau d^k_1x\cdot
\Sz_{i_{k+1},\ldots,i_n}d_1^kx
\end{eqnarray*}
which is degenerate again by Lemma~\ref{commondeg}. The only non-zero
term is therefore
\begin{eqnarray*}
\psi_n\Sz_{0,\dots,0}x
&=&
x_{0;0,\dots,0}\otimes \psi_0(*)\;\;=\;\;x,
\end{eqnarray*}
and hence $\psi\phi x=x$ as required.
\end{proof}

\section{Deformation retraction of the loop group}

Both the Kan functor $G$ and the cobar construction model
loop spaces.
In the 1-reduced case it is easy to show that 
the Szczarba map 
$\phi :{\Omega C} X\to{C} GX$ is a weak equivalence, by 
applying Zeeman's comparison theorem to the 
map of spectral sequences associated with
\begin{equation}\label{sesmap}
\begin{array}[c]{c}
\xymatrix{
\Omega CX\dto_\phi\rto&\Omega CX\otimes_{\alpha_\phi}CX
\rto\dto&CX\dto^=\\
CGX\rto&C(GX\times_\tau X)\rto&CX
}
\end{array}\end{equation}
in which the total spaces are acyclic.
 
We  prove here the
following stronger result.

\begin{theorem}\label{retracttheorem}
Let $X$ be a $0$-reduced simplicial set.  Let $\phi$  be the Szczarba map and $\psi$  the retraction map defined above.   

There is a 
natural strong deformation retraction of chain complexes
\[
\xymatrix{\ccobar X\rto<0.8ex>^-\phi&CGX.\lto<0.8ex>^-\psi 
\;\;\ar`r^d^{\qquad\qquad \Phi}[]}
\]
\end{theorem}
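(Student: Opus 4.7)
By Proposition \ref{psiphi} we already have $\psi\phi = \mathrm{id}_{\ccobar X}$, and $\phi$ and $\psi$ have been shown in the previous two sections to be natural chain maps. What remains is to construct a natural chain homotopy $\Phi\colon (CGX)_* \to (CGX)_{*+1}$ with $d\Phi + \Phi d = \mathrm{id} - \phi\psi$. The plan is to build $\Phi$ by the acyclic models method, applied to the functor $CG$ on the category of $0$-reduced simplicial sets using the models supplied by Morace and Prouté~\cite{mp}.

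I would proceed inductively on the degree $n$. Set $\Phi_{-1} = \Phi_0 = 0$; the homotopy formula in degree zero is satisfied because $\phi_0\psi_0 = \mathrm{id}$ by Proposition \ref{degreezero}. Assume inductively that $\Phi_k$ has been defined naturally for $k < n$ and satisfies $d\Phi_k + \Phi_{k-1}d = \mathrm{id} - \phi_k\psi_k$. Consider the natural transformation
\[
\alpha_n \;=\; \mathrm{id} - \phi_n\psi_n - \Phi_{n-1}d \colon (CGX)_n \longrightarrow (CGX)_n.
\]
A direct computation using $d^2 = 0$ and the inductive hypothesis shows that $d\alpha_n = 0$, so $\alpha_n(g)$ is a cycle for every generator $g \in GX_n$. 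The task is to choose, naturally in $X$, a chain $\Phi_n(g) \in (CGX)_{n+1}$ whose boundary is $\alpha_n(g)$.

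A nondegenerate element of $GX_n$ is a word in generators $\tau x^{\pm 1}$ for $x \in X_{n+1} \setminus s_0 X_n$, and therefore arises as the image of a universal such word under a natural map out of a suitable standard $0$-reduced model. On such a model, $\alpha_n$ evaluated on the universal word is a positive-degree cycle, and the acyclicity of $CG$ on the Morace--Prouté models produces a preimage under $d$; declaring this preimage to be $\Phi_n$ on the universal generator and extending by naturality defines $\Phi_n$ on all of $(CGX)_n$.

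The main obstacle is precisely the acyclicity input: since the representables $\Delta[n]$ are not $0$-reduced, the classical acyclic models theorem does not apply directly, and one must appeal to the nontrivial analysis of $CG$ on the appropriate $0$-reduced models in~\cite{mp}. Once that ingredient is in place, the inductive construction of $\Phi$ and the verification of naturality are essentially formal, and if desired one may further adjust $\Phi$ to enforce the side conditions $\Phi\phi = 0$, $\psi\Phi = 0$, $\Phi^2 = 0$ of an Eilenberg--Zilber-style strong deformation retract by a standard clean-up procedure.
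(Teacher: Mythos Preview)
Your proposal is correct and follows essentially the same acyclic-models argument as the paper: induct on degree, set $\Phi_0=0$, observe that the defect $\mathrm{id}-\phi\psi-\Phi_{n-1}d$ is a natural cycle, and lift it using the Morace--Prout\'e contracting homotopy on the appropriate $0$-reduced models. The one detail you leave implicit that the paper makes explicit is the choice of model: since a generator of $(GX)_n$ is a word of \emph{arbitrary length} in symbols $\tau x_i^{\pm1}$ with $x_i\in X_{n+1}$, the universal object is the infinite wedge $W(n+1)=\bigvee_{m\in\mathbb N}\overline\Delta[n+1]$, and one must verify that the extended contracting homotopy $\tilde h^{n+1}$ on $CGW(n+1)$ is natural with respect to the generator-permuting maps $\Psi_w$ so that the resulting $\Phi_n$ is well defined.
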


Recall that if $A$ and $B$ are chain complexes, $\nabla: A\to B$ and $f:B\to A$ are chain maps, and $h:B\to B$ is linear map of degree $+1$, then 
\[
\xymatrix{A\rto<0.8ex>^-\nabla&B\lto<0.8ex>^-f 
\!\ar`r^d^{\qquad\qquad h}[]}
\]
is a strong deformation retract if $f\nabla=Id_{A}$ and $\dd h+h\dd= \nabla f-Id_{B}$.  Given a strong deformation retract, one can apply the machinery of homological perturbation theory to transfer perturbations of the structure $B$ across to $A$, obtaining a new strong deformation retract \cite {gugenheim-lambe-stasheff:i}, \cite{gugenheim-lambe-stasheff:ii}, \cite{lambe-stasheff}.

\begin{proof} According to Proposition \ref{psiphi}, we need only to prove that there is a natural chain homotopy from the composite $\phi\psi $ to the identity map on $CGX$.  The proof, an acyclic models argument, proceeds by induction on degree.

The base step of the induction is trivial, by Proposition \ref{degreezero}.  We can simply set $\Phi_{0}=0: C_{0}GX\to C_{1}GX$ for all $0$-reduced simplicial sets $X$.

Suppose now that $\Phi_{k}:C_{k}GX \to C_{k+1}GX$ has been defined for all $0\leq k<n$ and for all $0$-reduced simplicial sets $X$ so that 
\begin{itemize}
\item [($1.k$)] $\Phi_{k}$ is natural in $X$ for all $k$, and 
\item [($2.k$)] $\dd \Phi_{k}+\Phi_{k-1}\dd =\phi\psi-Id_{CGX}$ for all $k$ and all $X$,
\end{itemize}
where $n\geq 1$.

Let $\overline\Delta [n]$ denote the quotient of the standard simpicial $n$-simplex by its $0$-skeleton.  If $x=(k_{0}\cdots k_{j})$ is {a} $j$-simplex of $\Delta[n]$, let $x\cdot n$ denote the $(j+1)$-simplex $(k_{0}\cdots k_{{j}}\, n)$. Let 
$$h^n_{i}:\big(G\overline\Delta[n]\big )_{i}\to \big(G\overline\Delta[n]\big)_{i+1} $$
denote the group homomorphism specified by $h^n_{i}(\tau x)=\tau (x\cdot n)$ for all $x\in \overline\Delta [n]_{i+1}$.  Let $$\bar h^n:C_{*}G\overline \Delta [n]\to C_{*+1}G\overline \Delta [n]$$ denote the degree $+1$ linear map specified by $\bar h^n_{i}(w)=-h^n_{i}(w)$ for all $w\in (G\overline\Delta[n])_{i}$.

Morace and Prout\'e proved in {\cite{mp}} that for all $i\geq 1$
$$\dd _{i+1}\bar h^n_{i}+ \bar h^n_{i-1}\dd _{i}=Id,$$
i.e., that $\bar h^n$ is a contraction in positive degrees. It follows that $H_{i}G\overline\Delta [n]=0$ for all $i\geq 1$.

{Consider the infinite wedge} 
$$W(n+1)=\bigvee_{m\in \mathbb N}\overline \Delta [n+1].$$
There is a chain homotopy 
$$\tilde h^{n+1}:C_{*}GW(n+1)\to C_{*+1}GW(n+1)$$
that is a contraction in positive degrees and  
that generalizes Morace and Prout\'e's construction.

Let
$$w=\tau \delta_{{m}_{1}}^{\alpha_{1}}\cdot \ldots\cdot \tau \delta_{{m}_{k}}^{\alpha_{k}}\in \big(GW(n+1)\big)_{n},$$
where $\delta_{{m}_{i}}$ denotes the unique nondegenerate
$(n+1)$-simplex in the ${m}_{i}^{\text{th}}$ copy of
$\overline\Delta [n+1]$ in $W(n)$, {and $\alpha_i=\pm1$}. 
Set 
$$\Phi_{n}(w)=\tilde h^n\big(\phi\psi(w)-w-\Phi_{n-1}(\dd w)\big).$$ 
The induction hypothesis implies that $\phi\psi(w)-w-\Phi_{n-1}(\dd w)$ is a cycle and that
\begin{align*}
\dd \Phi _{n}(w)=&-\tilde h^n\dd \big(\phi\psi(w)-w-\Phi_{n-1}(\dd w)\big)+\phi\psi(w)-w-\Phi_{n-1}(\dd w)\\
=&\phi\psi(w)-w-\Phi_{n-1}(\dd w).
\end{align*}
 Adding $\Phi_{n-1}(\dd w)$ to both sides of this equation, we obtain
$$\dd \Phi_{n}(w)+\Phi_{n-1}(\dd w)=\phi\psi(w)-w,$$
i.e., ($2.n$) holds for all such $w$.

Let $X$ be any $0$-reduced simplicial set, and let 
$$w=\tau x_{1}^{\alpha_{1}}\cdot \ldots\cdot \tau x_{k}^{\alpha_{k}}$$ 
be any nondegenerate $n$-simplex in $GX$, where $\alpha_{i}=\pm 1$ for all $i$.  Let $\zeta_{i}:\overline\Delta [n+1]\to X$ be the simplicial map representing $x_{i}$.

Let $\gamma: \overline\Delta [n+1]\to X$ denote the simplicial map collapsing everything to the basepoint.  Consider the  morphism of simplicial groups 
$$\Psi_{w}=G(\zeta_{1}\vee\cdots\vee \zeta_{k}\vee \bigvee_{m>k} \gamma): GW(n+1)\to GX.$$
Observe that 
$$\Psi _{w} (\tau \delta_{1}^{\alpha_{1}}\cdot \ldots\cdot \tau \delta_{k}^{\alpha_{k}})=w,$$
where $\delta_{i}$ denotes the unique nondegenerate $n$-simplex in the $i^{\text{th}}$ copy of $\overline\Delta[n+1]$ in $W(n+1)$.  

Using the map $\Psi_{w}$ constructed above for any generator $w$ of $C_{n}GX$, we define $\Phi_{n}:C_{n}GX\to C_{n+1}GX$ for any $0$-reduced simplicial set $X$ by 
$$\Phi_{n}(w)=C_{n+1}\Psi_{w}\circ \Phi_{n}(\tau \delta_{1}^{\alpha_{1}}\cdot \ldots\cdot \tau \delta_{k}^{\alpha_{k}}).$$
Note that if $X=W(n+1)$ and $w=\tau \delta_{j_{1}}^{\alpha_{1}}\cdot \ldots\cdot \tau \delta_{j_{k}}^{\alpha_{k}}$, then $$\Psi _{w}:GW(n+1)\to GW(n+1)$$ is a homomorphism of simplicial groups given simply by permuting generators.  It follows from the  construction of the chain homotopy $\bar h^{n+1}$ and therefore of the chain homotopy $\tilde h^{n+1}$ that $\tilde h^{n+1}$ is natural with respect to homomorphisms that simply permute generators, so that
$$C_{n+1}\Psi_{w}\circ \tilde h^n_{n}=\tilde h^n_{n} \circ C_{n}\Psi _{w}.$$
Consequently, $\Phi_{n}(w)$ is indeed well-defined, since $\phi$, $\psi$ and, by the induction hypothesis, $\Phi_{n-1}$ are all natural with respect to simplicial maps.

Moreover,
\begin{align*}
\dd \Phi_{n}(w)&=C_{n+1}\Psi_{w}\circ \dd \Phi_{n}(\tau \delta_{1}^{\alpha_{1}}\cdot \ldots\cdot \tau \delta_{k}^{\alpha_{k}})\\
&=C_{n+1}\Psi_{w}\circ \big( (\phi\psi -Id_{CGW(n)} -\Phi_{n-1}\dd )(\tau \delta_{1}^{\alpha_{1}}\cdot \ldots\cdot \tau \delta_{k}^{\alpha_{k}})\big)\\
&\overset {(\star)}{=}(\phi\psi -Id_{CGX} -\Phi_{n-1}\dd )\circ C_{n}\Psi_{w}(\tau \delta_{1}^{\alpha_{1}}\cdot \ldots\cdot \tau \delta_{k}^{\alpha_{k}})\\
&=(\phi\psi -Id_{CGX} -\Phi_{n-1}\dd )(w),
\end{align*}
where the equality $(\star)$ follows from naturality of $\phi$, $\psi$ and $\Phi_{n-1}$.  In other words,
$$\dd \Phi_{n}+\Phi_{n-1}\dd =\phi\psi- Id_{CGX},$$
for all $X$, i.e., condition $(2.n)$ holds.  

To conclude, observe that condition $(1.n)$ holds as well, since for all simplicial maps $g:X\to Y$ between $0$-reduced spaces and all $w\in GX$, 
$$Gg\circ \Psi_{w}=\Psi_{Gg(w)}:GW(n+1)\to GY.$$
\end{proof}

\begin{remark}  It is in order to be able to apply the chain homotopy of Morace and Prout\'e that we work with $0$-reduced simplicial sets.  There is no such chain homotopy in the $1$-reduced case, so it seems we are obliged to prove the existence of the strong deformation retract in the $0$-reduced case in order to conclude that it exists in the $1$-reduced case as well.
\end{remark}

\begin{remark}  As defined in the proof above, $\Phi_{n}$ is almost certainly not a derivation homotopy, since, as easy computations show, $\tilde h$ is not a derivation homotopy.
\end{remark}

\begin{remark}  Morace and Prout\'e showed that $\bar h^n_{i+1} \circ \bar h^n_{i}=0$ for all $i$ and $n$, from which it follows that $\tilde h_{i+1}^n\circ \tilde h_{i}^n=0$ as well and therefore that 
$$\tilde h_{n+1}^{n+1}\circ \Phi_{n}(\tau \delta_{j_{1}}^{\alpha_{1}}\cdot \ldots\cdot \tau \delta_{j_{k}}^{\alpha_{k}})=0$$
for all $j_{1},...,j_{k}$ and $\alpha_{1},...,\alpha _{k}$.
\end{remark}

\begin{remark} The results in this paper
  generalise from chain complexes to crossed complexes. There is a
  crossed cobar construction $\Omega\pi X$ on the
  fundamental crossed complex $\pi X$, see~\cite{twisted}, and
  we may define a `crossed' Szczarba map of crossed chain algebras $\phi:\Omega \pi
  X\to \pi GX$ that forms part of a deformation retraction
\[
\xymatrix{\Omega\pi X\rto<0.8ex>^-\phi&\pi GX.\lto<0.8ex>^-\psi 
\;\;\ar`r^d^{\qquad\qquad \Phi}[]}
\]
The classical argument that
$\phi$ is a weak equivalence, using 
\adraft{\eqref{sesmap}}, 
does not go through
in this slightly non-abelian 
situation,
since it seems there is no good notion of twisted tensor product of
crossed complexes.
\end{remark}

\bigskip
 
\bigskip

\begin{center}
\textsc{Acknowledgements}
\end{center}

The second author would like to warmly thank the Centre de Recerca
Matem\`atica University 
for the excellent working
conditions provided in the final stages of this paper.
A.T. is partially supported by MEC/FEDER grant MTM2007-63277.

\bigskip
 
\bigskip

\bibliographystyle{amsplain}
\bibliography{szcz}

\bigskip
 
\bigskip

\end{document}